\journal{Discrete Applied Mathematics}
\begin{document}

	\newtheorem{theorem}{Theorem}[section]
	\newtheorem{lemma}[theorem]{Lemma}
	\newtheorem{corollary}[theorem]{Corollary}
	\newtheorem{proposition}[theorem]{Proposition}
	\newtheorem{fact}[theorem]{Fact}
	\newtheorem{observation}[theorem]{Observation}
	\newtheorem{claim}[theorem]{Claim}
	\newtheorem{definition}[theorem]{Definition}
	\newtheorem{conjecture}[theorem]{Conjecture}
	\newtheorem{problem}[theorem]{Problem}
	\newtheorem{remark}[theorem]{Remark}
	\newtheorem{example}[theorem]{Example}
	
	\newcommand{\gptwo}{{\rm gp_2}}
	\newcommand{\diss}{{\rm diss}}
	\newcommand{\igp}{{\rm igp}}
	
	\newcommand{\imp}{{\rm imp}}
	\newcommand{\gp}{{\rm gp}}
	\newcommand{\sun}{{\rm Sun}}
	\newcommand{\mono}{{\rm mp}}
	\newcommand{\diam}{{\rm diam}}
	\newcommand{\s}{{\rm s}}
	\newcommand{\TODO}[1]{\textcolor{red}{TODO: #1}}

	\begin{frontmatter}
		
		
		\title{On monophonic position sets in graphs}
		

		
		\author[label1]{Elias John Thomas}
		\ead{eliasjohnkalarickal@gmail.com}
		\author[label2]{Ullas Chandran S. V.}
		\ead{svuc.math@gmail.com}
		\author[label3]{James Tuite}
		\ead{james.tuite@open.ac.uk}
		\author[label4]{Gabriele {Di Stefano}}
		\ead{gabriele.distefano@univaq.it}

		\address[label1]{Department of Mathematics, Mar Ivanios College, University of Kerala, Thiruvananthapuram-695015, Kerala, India}
		
		\address[label2]{Department of Mathematics, Mahatma Gandhi College, University of Kerala, Thiruvananthapuram-695004, Kerala, India}
		\address[label3]{Department of Mathematics and Statistics, Open University, Walton Hall, Milton Keynes, UK}
		\address[label4]{Department of Information Engineering, Computer Science and Mathematics, University of L'Aquila,  Italy}

		\begin{abstract}
			The general position problem in graph theory asks for the largest set $S$ of vertices of a graph $G$ such that no shortest path of $G$ contains more than two vertices of $S$. In this paper we consider a variant of the general position problem called the \emph{monophonic position problem}, obtained by replacing `shortest path' by `induced path'. We prove some basic properties and bounds for the monophonic position number of a graph and determine the monophonic position number of some graph families, including unicyclic graphs, complements of bipartite graphs and split graphs. We show that the monophonic position number of triangle-free graphs is bounded above by the independence number. We present realisation results for the general position number, monophonic position number and monophonic hull number. Finally we discuss the complexity of the monophonic position problem.	\end{abstract}
		
		\begin{keyword}
			general position set \sep general position number  \sep monophonic position set \sep monophonic position number.
			
			\MSC 05C12 \sep 05C69 
		\end{keyword}
		
	\end{frontmatter}

	\section{Introduction}\label{introduction}
	In 1900 Dudeney, famous for his mathematical puzzles, posed the following question~\cite{dudeney-1917}: what is the largest number of pawns that can be placed on an $n \times n$ chessboard such that no three pawns are on a straight line? This problem was generalised to the setting of graph theory independently at least three times in~\cite{ullas-2016,Korner,manuel-2018a} as follows: a set of vertices $S$ in a graph $G$ is in \emph{general position} if no shortest path of $G$ contains more than two vertices of $S$. The problem then consists of finding the largest set of vertices in general position for a given graph $G$. This has been shown to be an NP-complete problem~\cite{manuel-2018a}. The general position problem has been the subject of intensive research; for some recent developments see~\cite{Ghorbani-2020,Klavzar-2019,manuel-2018b,Neethu-2020,Patkos-2019,Thomas-2020}. 
	
	Some interesting variants of the general position problem have been considered in the literature. In~\cite{KlaKuzPetYer} the authors consider the general position problem using the \emph{Steiner distance} instead of the normal graph distance. In~\cite{KlaRalYer} the authors set a limit on the length of the shortest paths considered. For a fixed integer $d$, they define a set $S$ of vertices of a graph $G$ to be a \emph{general $d$-position set} if for any three vertices $u,v,w \in S$ that lie on a common geodesic $P$, the length of $P$ is greater than $d$; the number of vertices in a largest general $d$-position set is the \emph{general $d$-position number} $\gp _d(G)$ of $G$. The paper~\cite{ThoCha} discusses the largest general position sets that are also independent sets. A further recent variant of the general position problem can be found in~\cite{diStef}, which discusses \emph{mutual visibility sets}; a set $M$ of vertices of a graph $G$ is \emph{mutually visible} if for any $u,v \in M$ there exists at least one shortest $u,v$-path in $G$ that intersects $M$ only in the vertices $u,v$.  In this paper, we consider a variation of the general position problem, which we call the \emph{monophonic position problem}, obtained by replacing `shortest path' by `induced path'.   
	
	We now define the terminology that will be used in this paper. By a graph $G = (V,E)$ we mean a finite, undirected simple graph. The set of neighbours of a vertex $u$ will be written $N(u)$. The \emph{distance} $d(u,v)$ between two vertices $u$ and $v$ in a connected graph $G$ is the length of a shortest $u,v$-path in $G$; any such shortest $u,v$-path is a \emph{geodesic}. A path $P$ in $G$ is \emph{induced} or \emph{monophonic} if $G$ contains no chords between non-adjacent vertices of $P$.
	
	We will denote the subgraph of $G$ induced by a subset $S \subseteq V(G)$ by $G[S]$. A vertex is \emph{simplicial} if its neighbourhood induces a clique; in particular every leaf is simplicial. We denote the number of simplicial vertices and leaves of a graph $G$ by $\s(G)$ and $\ell (G)$ respectively. The \emph{clique number} $\omega (G)$ of $G$ is the number of vertices in a maximum clique in $G$ and the \emph{independence number} $\alpha (G)$ is the number of vertices of a maximum independent set. A subset $S \subseteq V(G)$ is an \emph{independent union of cliques} of $G$ if $G[S]$ is a disjoint union of cliques; the number of vertices in a maximum independent union of cliques will be written as $\alpha ^{\omega }(G)$. A graph $G$ is a \emph{block graph} if every maximal 2-connected component of $G$ is a clique.  
	
	The path of order $\ell $ and length $\ell -1$ will be written as $P_{\ell}$ and the cycle of length $\ell $ as $C_{\ell }$. The \emph{join} $G \vee H$ of two graphs is the graph formed from the disjoint union of $G$ and $H$ by joining every vertex of $G$ to every vertex of $H$ by an edge. Let $G$ and $H$ be graphs where $V(G) = \{v_1 ,\dots, v_n\} $; then the \emph{corona product} $G \odot H$ is obtained from the disjoint union of $G$ and $n$ disjoint copies of $H$, say $H_1,\dots,H_n$, by making the vertex $v_i$ adjacent to every vertex in $H_i$ for $1 \leq i\leq n$. Finally, the \emph{Cartesian product} $G \Box H$, is the graph with vertex set $V(G\Box H) = V(G) \times V(H)$ such that two vertices $(u_1,v_1),(u_2,v_2)$ are adjacent in $G \Box H$ if and only if either $u_1 = u_2$ and $v_1 \sim v_2$ in $H$, or else $v_1 = v_2$ and $u_1 \sim u_2$ in $G$. 
	
	For two distinct vertices $u,v$ of a graph $G$, the \emph{monophonic interval} $K[u,v]$ is the set of all vertices lying on at least one monophonic path connecting $u$ and $v$. The \emph{monophonic closure} of a set $M \subseteq V(G)$ is $K[M]=\bigcup_{_{u,v\in M}}K[u,v].$ If $K[M] = M$, then $M$ is \emph{monophonically convex} or \emph{m-convex}. A smallest m-convex set containing $M$ is an \emph{m-convex hull} of $M$ and is denoted by $[M]_m$.  It is possible to construct the monophonic convex hull $[M]_m $ from the sequence $\{K_ {k}[M]\}$, $k \geq 0$, where $K_ {0}[M]=M$, $K_ {1}[M]=K[M]$ and $K_ {k}[M]=K[K_ {k-1}[M]]$ for $k \geq 2$. From some term onwards, the sequence must be constant; if $r$ is the smallest number such that $K_{r}[M]=K_{r+1}[M]$, then $K_{r}[M] = [M]_m$. A set $M \subseteq V(G)$ is a \emph{monophonic hull set} if $[M]_m = V(G)$. The \emph{monophonic hull number} $h_{m}(G)$ of $G$ is the number of vertices in a smallest monophonic hull set of $G$. A vertex $u$ in $M$ is said to be an \emph{m-interior} vertex of $M$ if $u\in K[M \setminus \{u\}]$ and the set of all m-interior vertices of $M$ is denoted by $M^0$. For any other graph-theoretical terminology we refer to~\cite{BonMur}.   
	
	The plan of this paper is as follows. In Section~\ref{Sec:monophonic position sets}  we introduce the monophonic position number of a graph, determine some simple bounds and discuss the behaviour of the monophonic position number under some graph operations. In Section~\ref{sec:triangle-free} we give a sharp bound for the mp-number of triangle-free graphs and determine the mp-numbers of unicyclic graphs and the join and corona products of graphs. In Section~\ref{sec:split graphs} we find the mp-numbers of split graphs and complements of bipartite graphs. Section~\ref{sec:realisation} provides realisation results for the gp-number, mp-number and monophonic hull number. Finally in Section~\ref{sec:comp} we consider the computational complexity of the monophonic position problem.

	\section{Monophonic position sets in graphs}\label{Sec:monophonic position sets} 
	
	Recall that a set $S$ of vertices in a graph $G$ is a \emph{general position set} if no shortest path in $G$ contains more than two vertices of $S$; by convention a \emph{gp-set} is a largest general position set of $G$. The number of vertices in a largest general position set of $G$ is called the \emph{general position number} of $G$ and is denoted by $\gp(G)$.  We have the following result for the general position number of graphs with diameter two.
	
	\begin{theorem}[\cite{O1}]\label{thm:diameter2}
		If $\diam(G) = 2$, then $\gp(G) = \max\{\omega(G), \eta(G)\},$ where $\eta(G)$ is the maximum order of an induced complete multipartite subgraph of the complement of $G$. 
	\end{theorem}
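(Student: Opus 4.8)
The plan is to reduce the statement to the classical structure theorem for graphs with no induced path on three vertices. The first and crucial step is the observation that puts the diameter hypothesis to work: since $\diam(G) = 2$, a walk $u - w - v$ of length two is a geodesic of $G$ if and only if $uv \notin E(G)$ (if $uv \in E(G)$ then $d(u,v) = 1$; otherwise $2 \le d(u,v) \le \diam(G) = 2$). Furthermore every geodesic of $G$ has length at most two, and a geodesic of length two passes through exactly three vertices. Hence three distinct vertices of a set $S$ lie on a common geodesic of $G$ if and only if, after relabelling, they are vertices $u, w, v \in S$ with $uw, wv \in E(G)$ and $uv \notin E(G)$; equivalently, $S$ is \emph{not} a general position set of $G$ if and only if $G[S]$ contains $P_2$, the path of length two, as an induced subgraph.

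Secondly, I would use the well-known fact that a graph has no induced $P_2$ precisely when each of its connected components is a clique, i.e. when it is an independent union of cliques; the non-trivial direction follows because a shortest path of length at least two inside a component is an induced $P_2$. Together with the first step this shows that $S$ is a gp-set of $G$ if and only if $G[S]$ is an independent union of cliques, and therefore $\gp(G) = \alpha^{\omega}(G)$.

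Thirdly, I would verify the (diameter-free) identity $\alpha^{\omega}(G) = \max\{\omega(G), \eta(G)\}$. A maximum clique is an independent union of cliques, so $\alpha^{\omega}(G) \ge \omega(G)$, and if $S$ induces a single clique then $|S| \le \omega(G)$. If instead $G[S]$ is a disjoint union of $k \ge 2$ cliques with orders $n_1, \dots, n_k$, then $\overline{G}[S]$ is the complete multipartite graph $K_{n_1, \dots, n_k}$, whence $|S| \le \eta(G)$; conversely, if $\overline{G}[S]$ is complete multipartite then $G[S]$ is a disjoint union of cliques, so $S$ is a gp-set and $\gp(G) \ge \eta(G)$. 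Assembling the three steps yields $\gp(G) = \max\{\omega(G), \eta(G)\}$.

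The main obstacle, such as it is, lies entirely in the first step: one must match the condition that a geodesic of $G$ contains more than two vertices of $S$ exactly with the condition that $G[S]$ has an induced $P_2$, and this is precisely the place where $\diam(G) = 2$ is indispensable — for larger diameter a three-vertex geodesic need not be witnessed by a non-edge, and longer geodesics would also have to be controlled. A further small point is to fix the convention for complete multipartite graphs: if a single part is allowed then $\eta(G) = \alpha^{\omega}(G) \ge \omega(G)$ and the maximum is redundant, while if at least two parts are required then the $k = 1$ case is accounted for exactly by the $\omega(G)$ term; in both readings the stated formula holds.
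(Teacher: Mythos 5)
Your proof is correct. The paper itself gives no argument for this statement -- it is quoted from the cited reference \cite{O1} -- so there is nothing internal to compare against, but your reduction is exactly the natural one: under $\diam(G)=2$ every geodesic has at most three vertices, so $S$ fails to be in general position precisely when $G[S]$ contains an induced path on three vertices, i.e.\ precisely when $G[S]$ is not a disjoint union of cliques; this gives $\gp(G)=\alpha^{\omega}(G)$, and the identity $\alpha^{\omega}(G)=\max\{\omega(G),\eta(G)\}$ is the routine complementation step you carry out. Your closing remark about the convention for one-part ``complete multipartite'' graphs is the right point to flag, and you resolve it correctly; the argument is complete as written.
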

	We now introduce the following variant of the gp-number.
	\begin{definition}
		A set $M \subseteq V(G)$ is a \emph{monophonic position set} or \emph{mp-set} of $G$ if no three vertices of $M$ lie on a common monophonic path in $G$. The \emph{monophonic position number} or \emph{mp-number} $\mono(G)$ of $G$ is the number of vertices in a largest mp-set of $G$.
	\end{definition}
	For an example of these concepts see Figure~\ref{fig:Petersen}. Observe that every monophonic position set $S$ of a graph $G$ is also in general position; it follows that $\mono(G) \leq \gp(G)$. Any pair of vertices is in monophonic position, so for graphs with order $n \geq 2$ we have $2 \leq \mono(G) \leq n$. It is easily seen that a connected graph satisfies $\mono(G) = n$ if and only if $G \cong K_n$ and the only connected graphs $G$ with $\mono (G) = n-1$ are a) the joins of $K_1$ with a disjoint union of cliques and b) graphs formed from a clique by deleting between one and $n-2$ edges incident to a given vertex. Also the mp-number of the cycle $C_n$ is given by $\mono (C_n) = 2$ for $n \geq 4$.

	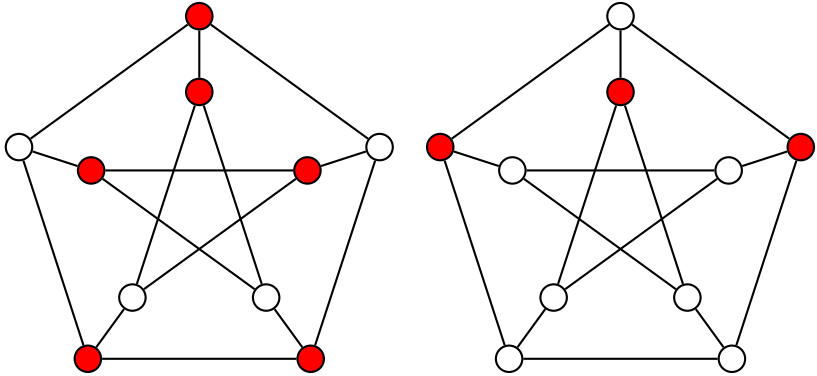
\begin{figure}
		\centering
		\begin{tikzpicture}[x=0.2mm,y=-0.2mm,inner sep=0.2mm,scale=0.7,thick,vertex/.style={circle,draw,minimum size=10}]
			\node at (180,200) [vertex,fill=red] (v1) {};
			\node at (8.8,324.4) [vertex] (v2) {};
			\node at (74.2,525.6) [vertex,fill=red] (v3) {};
			\node at (285.8,525.6) [vertex,fill=red] (v4) {};
			\node at (351.2,324.4) [vertex] (v5) {};
			\node at (180,272) [vertex,fill=red] (v6) {};
			\node at (116.5,467.4) [vertex] (v7) {};
			\node at (282.7,346.6) [vertex,fill=red] (v8) {};
			\node at (77.3,346.6) [vertex,fill=red] (v9) {};
			\node at (243.5,467.4) [vertex] (v10) {};

			\node at (580,200) [vertex] (u1) {};
			\node at (408.8,324.4) [vertex,fill=red] (u2) {};
			\node at (474.2,525.6) [vertex] (u3) {};
			\node at (685.8,525.6) [vertex] (u4) {};
			\node at (751.2,324.4) [vertex,fill=red] (u5) {};
			\node at (580,272) [vertex,fill=red] (u6) {};
			\node at (516.5,467.4) [vertex] (u7) {};
			\node at (682.7,346.6) [vertex] (u8) {};
			\node at (477.3,346.6) [vertex] (u9) {};
			\node at (643.5,467.4) [vertex] (u10) {};
			\path
			(v1) edge (v2)
			(v1) edge (v5)
			(v2) edge (v3)
			(v3) edge (v4)
			(v4) edge (v5)
			
			(v6) edge (v7)
			(v6) edge (v10)
			(v7) edge (v8)
			(v8) edge (v9)
			(v9) edge (v10)
			
			(v1) edge (v6)
			(v2) edge (v9)
			(v3) edge (v7)
			(v4) edge (v10)
			(v5) edge (v8)

			(u1) edge (u2)
			(u1) edge (u5)
			(u2) edge (u3)
			(u3) edge (u4)
			(u4) edge (u5)
			
			(u6) edge (u7)
			(u6) edge (u10)
			(u7) edge (u8)
			(u8) edge (u9)
			(u9) edge (u10)
			
			(u1) edge (u6)
			(u2) edge (u9)
			(u3) edge (u7)
			(u4) edge (u10)
			(u5) edge (u8)
			
			;
		\end{tikzpicture}
		\caption{The Petersen graph with a maximum gp-set (left) and a maximum mp-set (right)}
		\label{fig:Petersen}
	\end{figure}
	
	We begin by noting that for a wide class of graphs the monophonic position number coincides with the general position number. A graph $G$ is \emph{distance-hereditary} if for any connected induced subgraph $H$ of $G$ and vertices $u,v \in H$ we have $d_H(u,v) = d_G(u,v)$, where $d_G(u,v)$ is the distance between $u$ and $v$ in $G$ and $d_H(u,v)$ is the distance between $u$ and $v$ in the subgraph $H$. Distance-hereditary graphs have been characterised by Howorka~\cite{Howorka-77} and many works in the literature are dedicated to them as well as to their generalisations or specialisations (see e.g.~\cite{JDA04,CD06,GDS12,Howorka-81}). In a distance-hereditary graph a path is a geodesic if and only if it is induced; hence the definitions of general position and monophonic position coincide for such graphs.
	
	\begin{observation}\label{lem:dh}
		For any distance-hereditary graph $G$ we have $\mono (G) = \gp(G)$.
	\end{observation}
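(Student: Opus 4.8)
The plan is to show that in a distance-hereditary graph every monophonic path is in fact a geodesic, so that the monophonic position condition and the general position condition coincide on $G$. Since the inequality $\mono(G) \leq \gp(G)$ has already been noted for all graphs (every geodesic being an induced path), it suffices to establish the reverse inequality $\gp(G) \leq \mono(G)$.

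First I would record the key observation. Let $P$ be a monophonic $u,v$-path in $G$. By definition $G$ contains no chord between non-adjacent vertices of $P$, which is precisely to say that $G[V(P)] = P$; thus $P$ is a connected induced subgraph of $G$. Applying the defining property of distance-hereditary graphs to $H = P$ gives $d_G(u,v) = d_P(u,v)$, and the right-hand side is simply the length of $P$. Hence $P$ is a geodesic of $G$; in other words, in a distance-hereditary graph the induced paths are exactly the geodesics.

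With this in hand the result is immediate. Let $S$ be a gp-set of $G$ with $|S| = \gp(G)$. If some three vertices of $S$ were to lie on a common monophonic path $P$, then by the observation $P$ would be a geodesic containing three vertices of $S$, contradicting the assumption that $S$ is in general position. Therefore $S$ is a monophonic position set, so $\mono(G) \geq |S| = \gp(G)$. Combined with $\mono(G) \leq \gp(G)$ this gives $\mono(G) = \gp(G)$.

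There is no serious obstacle here: the whole argument rests on the single elementary remark that a chordless path is an induced subgraph, so that the distance-hereditary hypothesis forces such a path to preserve distances and hence to be shortest. The one point to handle with care is the direction of the inclusion — in an arbitrary graph the geodesics form a subclass of the induced paths, and distance-heredity is exactly the property needed to reverse this and make the two notions coincide on the vertex subsets that matter for the position condition.
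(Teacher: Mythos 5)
Your argument is correct and is essentially the paper's own proof: both establish that in a distance-hereditary graph every induced path is a geodesic (by applying the distance-hereditary property to the induced subgraph spanned by the path) and conclude that the two position conditions coincide. No issues.
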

	
	Distance-hereditary graphs $G$ are not characterised by the relation $\mono(G) = \gp(G)$; for example, we have $\mono(C_5 \odot K_1) = \gp(C_5 \odot K_1) = 5$, but this graph is not distance-hereditary. Since the class of distance-hereditary graphs includes other well studied classes of graphs such as cographs, block graphs, trees and Ptolemaic graphs, results provided for these classes can be seen as a consequence of Observation~\ref{lem:dh}. Hence by Theorem~\ref{thm:diameter2} and the result of~\cite{manuel-2018a} we immediately have the following corollaries.
	
	\begin{corollary}\label{cor:block graphs}
		For any block graph $G$ with $\s(G)$ simplicial vertices, $\mono (G) = \s(G)$. In particular, for any tree $T$ with $\ell (T)$ leaves, we have $\mono(T) = \ell(T)$.
	\end{corollary}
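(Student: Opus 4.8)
The plan is to derive the result for block graphs as a direct consequence of Observation~\ref{lem:dh} together with the known value of the general position number of block graphs. First I would recall that every block graph is distance-hereditary: if $H$ is a connected induced subgraph of a block graph $G$ and $u,v\in V(H)$, then the unique path between $u$ and $v$ in $G$ passes through a sequence of cut vertices and cliques, and its length is unchanged when we restrict to $H$. By Observation~\ref{lem:dh} we therefore have $\mono(G)=\gp(G)$ for every block graph $G$.

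Next I would invoke (or quickly re-prove) the fact that $\gp(G)=\s(G)$ for block graphs, a result already present in the general position literature (see e.g.~\cite{manuel-2018b}). For completeness the inequality $\gp(G)\ge \s(G)$ follows by checking that the set of simplicial vertices is in general position: a shortest path between two simplicial vertices of a block graph cannot pass through a third simplicial vertex, since any simplicial vertex is an endpoint of every shortest path through it. Conversely, $\gp(G)\le \s(G)$ follows because any general position set $S$ can contain at most one vertex from each block (two vertices of a block lie on a shortest path together with a third vertex of an adjacent block unless one of them is simplicial), and a counting argument on the block-cut tree shows $|S|\le \s(G)$. Combining this with the previous paragraph gives $\mono(G)=\s(G)$.

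Finally I would observe that a tree $T$ is a block graph in which every block is an edge $K_2$, so its simplicial vertices are exactly its leaves, giving $\s(T)=\ell(T)$ and hence $\mono(T)=\ell(T)$; this also follows from the fact that trees are distance-hereditary and $\gp(T)=\ell(T)$.

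The main obstacle, such as it is, lies not in any single deep step but in deciding how much of the known formula $\gp(G)=\s(G)$ for block graphs to reprove versus cite; since the identity $\mono(G)=\gp(G)$ is already handed to us by Observation~\ref{lem:dh}, the corollary is essentially a bookkeeping exercise, and I expect the write-up to be only a few lines invoking the distance-hereditary property and the established general position value.
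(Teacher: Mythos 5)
Your proposal is correct and takes essentially the same route as the paper: block graphs are distance-hereditary, so Observation~\ref{lem:dh} gives $\mono(G)=\gp(G)$, and the known formula $\gp(G)=\s(G)$ for block graphs (cited from the general position literature) finishes the argument, with the tree case following since the simplicial vertices of a tree are its leaves. One small caution: your optional sketch of the upper bound $\gp(G)\le\s(G)$ via ``at most one vertex from each block'' is false as stated (a single clique is one block, all of whose vertices are simplicial and in general position), so you should simply cite that result, as the paper does.
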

	
	\begin{corollary}\label{cor:complete multipartite graphs}
		For integers $r_1\geq r_2 \geq \dots \geq r_t$ the mp-number and gp-number of the complete multipartite graph $K_{r_1,r_2,\dots ,r_t}$ are given by
		\[ \gp(K_{r_1,r_2,\dots ,r_t}) = \mono(K_{r_1,r_2,\dots ,r_t}) = \max \{ r_1,t\} . \] 
	\end{corollary}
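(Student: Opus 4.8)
The plan is to establish the two values separately and then combine them via the inequality $\mono(G) \le \gp(G)$ noted above.

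First, for the general position number I would invoke Theorem~\ref{thm:diameter2}. Write $G = K_{r_1,\dots,r_t}$. If $t = 1$, or if $t\geq 2$ and all $r_i = 1$, the claim is immediate (an edgeless graph, respectively a complete graph), so assume $t \ge 2$ and some $r_i \ge 2$; then $\diam(G) = 2$. Here $\omega(G) = t$, since a transversal of the parts is a maximum clique. For $\eta(G)$, observe that $\overline{G}$ is the disjoint union of the cliques $K_{r_1},\dots,K_{r_t}$, so every induced subgraph $H$ of $\overline{G}$ is again a disjoint union of cliques. I would then argue that such an $H$ is complete multipartite precisely when either (i) $H$ is a single clique $K_s$ with $s \le r_1$, or (ii) $H$ is edgeless, i.e.\ a union of at most $t$ isolated vertices, one per part — the point being that a complete multipartite graph with at least two parts is connected, while one with a single part is edgeless. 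Hence $\eta(G) = \max\{r_1,t\}$, and Theorem~\ref{thm:diameter2} gives $\gp(G) = \max\{\omega(G),\eta(G)\} = \max\{r_1,t\}$.

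Second, for the monophonic position number the upper bound $\mono(G) \le \gp(G) = \max\{r_1,t\}$ is free. For the matching lower bound I would exhibit two mp-sets: a maximum part, of size $r_1$, and a transversal of the parts, which is a clique of size $t$. The key structural observation is that every induced path of $K_{r_1,\dots,r_t}$ has at most three vertices — if $p_1p_2p_3p_4$ were induced, then $p_1p_3,\, p_1p_4,\, p_2p_4 \notin E(G)$ would force $p_1, p_3, p_4$ into one part, contradicting $p_3 p_4 \in E(G)$ — and that a three-vertex induced path $p_1p_2p_3$ has its endpoints $p_1, p_3$ in one part and $p_2$ in another. It follows that a single part cannot contain all three vertices of an induced path (it would need three vertices in one part), and neither can a clique (a clique meets each part in at most one vertex, whereas a three-vertex induced path uses two vertices from the same part). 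Thus both sets are in monophonic position, so $\mono(G) \ge \max\{r_1,t\}$, and combining with the upper bound yields equality.

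The argument is essentially bookkeeping; the only step needing genuine care is the computation of $\eta(G)$, that is, correctly classifying which induced subgraphs of a disjoint union of cliques are complete multipartite, together with a quick check of the degenerate cases ($t = 1$, or $G$ complete) in which Theorem~\ref{thm:diameter2} does not directly apply.
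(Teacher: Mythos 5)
Your proof is correct, but it takes a different route from the paper's on the monophonic half of the statement. The paper's entire proof is two sentences: the gp-value is quoted from Theorem~\ref{thm:diameter2}, and the mp-value then follows because complete multipartite graphs are distance-hereditary, so $\mono = \gp$ by Observation~\ref{lem:dh}. You instead prove the mp-value from scratch: you classify the induced paths of $K_{r_1,\dots,r_t}$ (length at most two, with the endpoints of a $P_2$ in a common part), verify directly that a largest part and a transversal clique are mp-sets, and close the gap with $\mono \le \gp$. Your classification of induced paths is in effect a hands-on proof of the distance-heredity the paper invokes, so the two arguments are close in spirit; what yours buys is self-containment (no appeal to the distance-hereditary machinery) and an explicit computation of $\eta(G)$ for the complement of a disjoint union of cliques, which the paper leaves implicit when citing Theorem~\ref{thm:diameter2}. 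Your handling of the degenerate cases ($t=1$, or all $r_i=1$, where the diameter-two theorem does not apply) is a point of care the paper skips over. Both arguments are sound.
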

	
	We now provide some bounds on the monophonic position number of a graph. It was shown in~\cite{ullas-2016} that the gp-number of a graph with diameter $D$ is bounded above by $n-D+1$; there is an analogous upper bound for the mp-number of a graph in terms of the length of its longest induced path.
	
	\begin{proposition}\label{n-L+1}
		If $G$ is a graph with order $n$ and the longest induced path in $G$ has length $L$, then then $\mono(G) \leq n-L+1$. This bound is sharp.
	\end{proposition}	
	\begin{proof}
		Let $P$ be a longest monophonic path in $G$ with length $L$ and let $M$ be a maximum mp-set.  The path $P$ can contain at most two vertices of $M$, so that at least $L-1$ vertices of $G$ do not lie in $M$. The bound is sharp for cliques and paths.
	\end{proof}
	In~\cite{manuel-2018a} the authors bound the $\gp $-number of a graph $G$ using the \emph{isometric-path number}, which is the smallest number of geodesics in $G$ such that each vertex of $G$ is contained in exactly one of the geodesics. There is a similar upper bound for the $\mono $-number in terms of the \emph{induced path number} $\rho(G)$, which is defined to be the smallest number of induced paths in $G$ such that each vertex of $G$ is contained in exactly one of the paths. This parameter was introduced in~\cite{ChaMcCSheHosHas}.
	
	\begin{lemma}\label{lem:induced path num upper bound}
		The mp-number of a graph $G$ is bounded above by $\mono(G) \leq 2\rho(G)$.
	\end{lemma}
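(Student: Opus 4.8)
The plan is to mimic the argument already used for Proposition~\ref{n-L+1}, replacing the single longest monophonic path by a minimum collection of induced paths covering all vertices. First I would fix an optimal induced path partition of $G$, that is, a set of induced paths $P_1, P_2, \dots, P_{\rho(G)}$ such that every vertex of $G$ lies in exactly one $P_i$; such a partition exists by the definition of $\rho(G)$ from~\cite{ChaMcCSheHosHas}.

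The key observation is that each $P_i$, being an induced path in $G$, has no chords and is therefore a monophonic path of $G$ in the sense of our definition. Hence, if $M$ is any mp-set of $G$, the defining property of mp-sets (no three vertices of $M$ on a common monophonic path) forces $|M \cap V(P_i)| \leq 2$ for every $i$. Since the $P_i$ partition $V(G)$, we get
\[
|M| = \sum_{i=1}^{\rho(G)} |M \cap V(P_i)| \leq 2\rho(G),
\]
and taking $M$ to be a maximum mp-set yields $\mono(G) \leq 2\rho(G)$.

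I do not anticipate a genuine obstacle here; the only point worth stating carefully is the remark that a partition path, being induced, qualifies as a monophonic path, so that the cap of two mp-vertices per path is legitimate. It may also be worth noting in passing whether the bound is sharp (for instance, large cliques or complete multipartite graphs with many parts can be examined against Corollary~\ref{cor:complete multipartite graphs}), though that is not required by the statement as written.
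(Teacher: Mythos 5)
Your proposal is correct and is essentially the same argument as the paper's: take an optimal induced path partition, observe that each path, being induced, can carry at most two vertices of an mp-set, and sum over the $\rho(G)$ paths. No issues.
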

	\begin{proof}
		Consider a partition of $V(G)$ into $\rho (G)$ induced paths and let $M$ be a largest $\mono $-set. $M$ can intersect each path in the partition in at most two vertices, so that $|M| \leq 2\rho(G)$.
	\end{proof}
	It is shown in~\cite{AkbHorWan} that the induced path number of any connected cubic graph $G$ with order $n \geq 7$ is at most  $\rho(G) \leq \frac{n-1}{3}$. Thus Lemma~\ref{lem:induced path num upper bound} has the following interesting corollary.
	
	\begin{corollary}\label{cor:cubicgraph}
		For any connected cubic graph with order $n \geq 7$, the monophonic position number is at most $\mono (G) \leq \left \lfloor \frac{2(n-1)}{3} \right \rfloor $.
	\end{corollary}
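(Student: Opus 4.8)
The plan is to obtain this bound by simply chaining together two results that are already on the table. First I would invoke Lemma~\ref{lem:induced path num upper bound}, which holds for every graph and gives $\mono(G) \leq 2\rho(G)$. Then I would apply the result of Akbari, Horton and Wang~\cite{AkbHorWan} quoted immediately before the corollary: any connected cubic graph $G$ of order $n \geq 7$ satisfies $\rho(G) \leq \frac{n-1}{3}$. Substituting the second inequality into the first yields
\[
\mono(G) \leq 2\rho(G) \leq 2\cdot\frac{n-1}{3} = \frac{2(n-1)}{3},
\]
which is exactly the claimed bound.

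The only thing that really needs checking is that the hypotheses line up: the corollary assumes precisely ``connected cubic graph with order $n \geq 7$'', which are exactly the hypotheses under which the bound $\rho(G) \leq \frac{n-1}{3}$ from~\cite{AkbHorWan} is stated, so there is no gap. There is no genuine obstacle here — the work has all been done in Lemma~\ref{lem:induced path num upper bound} and in the cited paper, and this corollary is just the composition of the two. If one wanted to say more, the natural follow-up question would be sharpness: whether a connected cubic graph of order $n$ can actually attain $\mono(G) = \frac{2(n-1)}{3}$, equivalently whether the two inequalities above can be made simultaneously tight; but proving or disproving that is outside what the statement asks for.
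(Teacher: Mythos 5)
Your proof is correct and is exactly the argument the paper intends: the corollary is obtained by composing Lemma~\ref{lem:induced path num upper bound} with the bound $\rho(G) \leq \frac{n-1}{3}$ from~\cite{AkbHorWan} for connected cubic graphs of order $n \geq 7$. (Minor point: the authors of that reference are Akbari, Horsley and Wanless, not Horton and Wang.)
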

	Corollary~\ref{cor:cubicgraph} is tight; for example the cube has order eight and $\mono $-number four. However, the authors conjecture that the coefficient $\frac{2}{3}$ is not best possible asymptotically.
	
	\begin{conjecture}
		The the largest possible monophonic position number of a cubic graph with order $n$ is $\frac{n}{3}+O(n)$.
	\end{conjecture}
	
	We can also bound $\mono(G)$ above in terms of the number of cut-vertices in $G$.
	
	\begin{lemma}\label{lemma:cut vertex}
		Let $G$ be any connected graph of order $n$ and $c(G)$ cut-vertices. Then $G$ has a maximum monophonic position set that does not contain any cut-vertices. Thus $\mono (G) \leq n-c(G)$.
	\end{lemma}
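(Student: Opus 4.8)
The plan is to show that any cut-vertex in a monophonic position set can be swapped out for a non-cut-vertex without decreasing the size of the set, and then iterate. Let $M$ be a maximum mp-set, and suppose $v \in M$ is a cut-vertex of $G$. Deleting $v$ disconnects $G$ into components $C_1, \dots, C_k$ with $k \geq 2$. The first observation I would establish is that $M \setminus \{v\}$ cannot meet two different components $C_i$ and $C_j$: if $x \in M \cap C_i$ and $y \in M \cap C_j$ with $i \neq j$, then every $x,y$-path passes through $v$, and in particular taking an induced $x,v$-path $P_1$ inside $G[C_i \cup \{v\}]$ and an induced $v,y$-path $P_2$ inside $G[C_j \cup \{v\}]$ and concatenating them yields an induced $x,y$-path through $v$ (it is induced because there are no edges between $C_i$ and $C_j$, so no chords can appear across the join point other than those internal to $P_1$ or $P_2$). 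This puts three vertices of $M$ on a common monophonic path, a contradiction.

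Hence all of $M \setminus \{v\}$ lies in a single component, say $C_1$, together possibly with $v$ itself. Now I would pick any component $C_j$ with $j \neq 1$ and let $w$ be a vertex of $C_j$ that is \emph{not} a cut-vertex of $G$ — such a vertex exists because $G[C_j \cup \{v\}]$ is a connected graph, and a finite connected graph on at least two vertices has at least two non-cut-vertices, at most one of which can be $v$; if $C_j$ is a single vertex then that vertex is already a leaf, hence non-cut. Consider $M' = (M \setminus \{v\}) \cup \{w\}$. I claim $M'$ is a monophonic position set of the same size as $M$. Any three vertices of $M'$ either all lie in $M \setminus \{v\} \subseteq C_1$, in which case they lie in no common monophonic path because $M$ was an mp-set (and $v \notin$ this triple), or exactly one of them is $w$. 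In the latter case the other two lie in $C_1$, and any path from $w \in C_j$ to a vertex of $C_1$ must pass through $v$; but an induced path cannot revisit $v$ after leaving it, so an induced path containing $w$ and two vertices $x, y \in C_1$ would have to enter $C_1$ from $v$, reach say $x$, then leave $C_1$ again — impossible without returning to $v$. So no induced path contains all three. Thus $|M'| = |M|$ and $M'$ has one fewer cut-vertex.

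Iterating this exchange removes all cut-vertices from a maximum mp-set, proving the first statement; since the resulting maximum mp-set avoids all $c(G)$ cut-vertices it has size at most $n - c(G)$, giving the bound. The main technical point to be careful about is verifying that the concatenated path in the first step is genuinely \emph{induced}: one must check that a shortest (or suitably chosen) induced path within each "side block" $G[C_i \cup \{v\}]$, when glued at $v$, produces no chords — this follows from the absence of edges between distinct components of $G - v$, but it is the step where the argument could go wrong if stated carelessly, and it is worth spelling out that we may take $P_1$ to be any induced $x,v$-path in $G[C_1 \cup \{v\}]$ and similarly for $P_2$.
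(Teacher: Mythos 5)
Your overall strategy is the same as the paper's (exchange each cut-vertex in a maximum mp-set for a non-cut-vertex chosen from a component of $G-v$ not meeting the rest of the set; the paper phrases this as an extremal argument rather than an iteration, and finds its non-cut-vertex as the endpoint of a longest path from $v$, but these are cosmetic differences). Your first observation, that $M\setminus\{v\}$ lies in a single component of $G-v$, is argued correctly: the concatenation of induced $x,v$- and $v,y$-paths from distinct components is indeed induced, since there are no edges between the components and any chord incident to $v$ would already violate inducedness of $P_1$ or $P_2$.

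However, the verification that $M'=(M\setminus\{v\})\cup\{w\}$ is an mp-set contains a genuine error. You claim that an induced path containing $w\in C_j$ and $x,y\in C_1$ is impossible because it ``would have to enter $C_1$ from $v$, reach say $x$, then leave $C_1$ again.'' That is not so: such a path could run $w,\dots,v,\dots,x,\dots,y$, entering $C_1$ exactly once through $v$ and collecting both $x$ and $y$ inside $C_1$ without ever leaving it. Nothing in the separation structure alone rules this out, so the step as written fails. The correct argument is different and does need the hypothesis on $M$ once more: any induced path containing $w$ and both $x,y$ must pass through the cut-vertex $v$, and therefore contains the three vertices $v,x,y$, all of which belong to the original mp-set $M$ --- contradicting that $M$ is in monophonic position. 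With that one-line repair the exchange argument goes through. A minor secondary point: you produce $w$ as a non-cut-vertex of $G[C_j\cup\{v\}]$ but use it as a non-cut-vertex of $G$; this implication is true (removing such a $w$ leaves $G[C_j\cup\{v\}]-w$ connected and still attached to the rest of $G$ through $v$), but it deserves a sentence.
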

	\begin{proof}
		Let $M$ be a maximum mp-set of $G$ that contains as few cut-vertices of $G$ as possible. Suppose for a contradiction that $M$ contains a cut-vertex $v$ of $G$. Let $W_1,W_2,\dots, W_k,$ $k \geq 2$, be the components of $G \setminus \{ v\} $.  Then $M$ intersects at most one component, say $W_1$, or else there will be a monophonic path between vertices of $M$ lying in different components that must pass through $v$. Let $W$ be the subgraph of $G$ induced by $W_2 \cup \{ v\} $ and $P$ be a longest path in $W$ with initial vertex $v$. It is easily seen that the terminal vertex $u$ of $P$ is not a cut-vertex of $G$. The set $M' = (M \setminus\{ v\} )\cup \{ u\} $ is also a maximum mp-set of $G$, but contains fewer cut-vertices of $G$ than $M$, contradicting the definition of $M$.
	\end{proof}
	Lemma~\ref{lemma:cut vertex} is sharp for block graphs. We now give a lower bound for $\mono (G)$ in terms of the number of simplicial vertices in $G$.
	\begin{lemma}\label{lemma:simplicial vertex}
		For any graph $G$ we have $\mono(G) \geq \s(G)$.
	\end{lemma}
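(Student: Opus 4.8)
The plan is to show that the set $S$ of \emph{all} simplicial vertices of $G$ is itself a monophonic position set; this immediately gives $\mono(G) \geq |S| = \s(G)$. So it suffices to prove that no induced path of $G$ contains three vertices of $S$.

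Suppose for contradiction that $S$ is not in monophonic position. Then there is an induced path $P$ of $G$ passing through three distinct simplicial vertices; relabel them $u,v,w$ so that they occur in this cyclic-free order along $P$. Then $v$ lies strictly between $u$ and $w$ on $P$, so $v$ has a path-neighbour $x$ on the $u$-side and a path-neighbour $y$ on the $w$-side, and $x \neq y$. Since $x$ and $y$ are at distance two from one another along $P$, they are non-consecutive vertices of $P$; as $P$ is induced (has no chords), this forces $x \not\sim y$ in $G$. On the other hand, $v$ is simplicial, so $N(v)$ induces a clique, and $x,y \in N(v)$ gives $x \sim y$ — a contradiction.

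Hence $S$ is an mp-set and $\mono(G)\geq \s(G)$. The argument is elementary; the only step needing a moment's care is checking that the "middle" vertex $v$ genuinely has two distinct neighbours on $P$, which is immediate because an induced path restricts each of its vertices' neighbourhoods within $P$ to the immediate predecessor and successor along $P$. (For disconnected $G$ there is nothing extra to do, since an induced path lies within a single component.) I do not expect any real obstacle here.
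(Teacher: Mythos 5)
Your proof is correct and is essentially the same as the paper's: both identify the middle simplicial vertex on a putative induced path through three simplicial vertices and derive a contradiction from its two path-neighbours being forced to be adjacent (by simpliciality) yet non-adjacent (since the path is induced). No issues.
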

	\begin{proof}
		Let $S$ be the set of simplicial vertices of $G$.  Suppose that $u,v,w \in S$ and that $P$ is a monophonic $u,v$-path containing $w$ (in particular, this requires $\s(G) \geq 3$). The vertex $w$ has two neighbours $w'$ and $w''$ on $P$; however, by definition, we must have $w' \sim w''$, so that $P$ is not induced, a contradiction.
	\end{proof}
	Corollary~\ref{cor:block graphs} shows that the lower bound in Lemma~\ref{lemma:simplicial vertex} is also sharp for block graphs. Finally we present two results that show the effect on monophonic position sets of adding a leaf to a graph. Recall that for any set $M \subset V(G)$ a vertex $u$ of $G$ is in the interior $M^0$ of $M$ if and only if there are vertices $x,y \in M \setminus \{ u\} $ such that $u$ lies on a monophonic $x,y$-path.
	
	\begin{lemma}\label{lemma:Trees}
		Let $ G^\prime $ be a graph obtained from a graph $G$ by adding a pendant edge $uv$ at a vertex $v$ of $G$. Then $\mono(G) \leq \mono(G^\prime) \leq \mono(G) + 1$. Moreover, $\mono(G^\prime) = \mono(G) + 1$ if and only if there exists a maximum mp-set $M$ of $G$ with $v\not \in M$ such that $\big{(} M \cup \{ v \} \big{)}^0 = \{ v \} $.
	\end{lemma}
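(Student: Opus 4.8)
The plan is to base everything on one structural observation about the pendant vertex $u$: since $u$ has degree one in $G'$ and its only neighbour is $v$, a monophonic path of $G'$ either misses $u$, in which case it is a monophonic path of $G$, or it has $u$ as an endpoint, in which case deleting $u$ leaves a monophonic path of $G$ that starts at $v$; conversely, prepending $u$ to any monophonic path of $G$ with an endpoint at $v$ produces a monophonic path of $G'$, and every monophonic path of $G$ remains monophonic in $G'$ because adjoining the leaf $u$ does not change the subgraph induced on any set of vertices not containing $u$. Granting this, the two inequalities are quick. If $M$ is a maximum mp-set of $G$ and some monophonic path of $G'$ contained three vertices of $M$, then, after deleting the endpoint $u$ if present, we would obtain a monophonic path of $G$ through the same three vertices of $M$, which is impossible; hence $M$ is an mp-set of $G'$ and $\mono(G')\ge\mono(G)$. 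Conversely, if $M'$ is a maximum mp-set of $G'$, then $M'\setminus\{u\}$ is an mp-set of $G$ (any monophonic path of $G$ through three of its vertices being monophonic in $G'$), so $|M'|\le\mono(G)+1$.

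The heart of the argument is the following sub-claim: for $M\subseteq V(G)$ with $v\notin M$, the set $M\cup\{u\}$ is an mp-set of $G'$ if and only if $M$ is an mp-set of $G$ and $(M\cup\{v\})^0\subseteq\{v\}$. For the forward direction, $M$ is an mp-set of $G$ by restricting monophonic paths as above, and if some $w\in M$ lay in $(M\cup\{v\})^0$ then $w$ would lie on a monophonic path between two vertices of $(M\setminus\{w\})\cup\{v\}$; if both endpoints lie in $M$ this gives three vertices of $M$ on a common monophonic path of $G$, a contradiction, while if one endpoint is $v$, prepending $u$ to that monophonic path of $G$ gives a monophonic path of $G'$ through $u$, $w$ and the other vertex of $M$, again a contradiction. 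For the converse, a monophonic path of $G'$ through three vertices of $M\cup\{u\}$ either avoids $u$ — three vertices of $M$ on a monophonic path of $G$, impossible — or has $u$ as an endpoint, and then it restricts to a monophonic path of $G$ from $v$ carrying two vertices $m_1,m_2$ of $M$; its initial segment from $v$ to whichever of $m_1,m_2$ is farther along is monophonic and has the other in its interior, placing that vertex in $(M\cup\{v\})^0\setminus\{v\}$, a contradiction. Here one uses the easy remark that an mp-set has empty m-interior, so the only way a vertex of $M$ can enter $(M\cup\{v\})^0$ is via a monophonic path to $v$.

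Now combine. Suppose $\mono(G')=\mono(G)+1$ and let $M'$ be a maximum mp-set of $G'$. Then $u\in M'$, for otherwise $M'$ would be an mp-set of $G$ of size $\mono(G)+1$. Also $v\notin M'$: if $v\in M'$ then, since $|M'|=\mono(G)+1\ge 3$, we may choose a third vertex $x\in M'\setminus\{u,v\}\subseteq V(G)$, and a monophonic path of $G$ between $v$ and $x$ prepended by $u$ is a monophonic path of $G'$ through $u,v,x$, contradicting that $M'$ is an mp-set. (The only exception is $G=K_1$, which we exclude by the standing assumption that graphs have order at least two; the disconnected case reduces to the component containing $v$.) Hence $M:=M'\setminus\{u\}$ is a maximum mp-set of $G$ with $v\notin M$, and the sub-claim gives $(M\cup\{v\})^0\subseteq\{v\}$. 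Finally $v\in K[M]$: otherwise $M\cup\{v\}$ would be an mp-set of $G$ of size $\mono(G)+1$, since a monophonic path of $G$ through three vertices of $M\cup\{v\}$ would — $M$ being an mp-set — pass through $v$ and two vertices of $M$, and the same minimal-subpath case analysis as before would force $v\in K[M]$ or else a vertex of $M$ into $(M\cup\{v\})^0\setminus\{v\}$, either way a contradiction. Therefore $(M\cup\{v\})^0=\{v\}$, as required. The reverse implication is immediate: given a maximum mp-set $M$ of $G$ with $v\notin M$ and $(M\cup\{v\})^0=\{v\}$, the sub-claim yields that $M\cup\{u\}$ is an mp-set of $G'$, so $\mono(G')\ge|M|+1=\mono(G)+1$, and with the upper bound we get equality.

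The routine inequalities and the reverse implication are painless once the pendant-vertex observation is recorded; the real work, and the step I expect to take the most care over, is the sub-claim — in particular pinning down the positions of $v$, $u$ and the two distinguished vertices of $M$ along a monophonic path and seeing precisely why the hypothesis $v\notin M$ is exactly what is needed in order to read off the condition $(M\cup\{v\})^0\subseteq\{v\}$.
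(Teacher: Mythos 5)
Your proof is correct and takes essentially the same route as the paper's: the same restriction/extension of monophonic paths across the pendant edge $uv$, the same deductions that $u\in M'$ and $v\notin M'$, and the same analysis of triples in $(M'\setminus\{u\})\cup\{v\}$ to pin down $(M\cup\{v\})^0=\{v\}$. Your packaging of the key step as an explicit sub-claim, and the separate check that $v\in K[M]$, merely spell out details the paper's terser argument leaves implicit.
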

	\begin{proof}
		Since every mp-set of $G$ is also an mp-set of $G^\prime $, it follows that $\mono(G) \leq \mono(G^\prime)$; conversely, if $M^\prime$ is a maximum mp-set of $ G^\prime $, then $M^\prime \setminus \{u \}$ is an mp-set of $G$, so that $|M^\prime \setminus \{u \}|\leq \mono(G)$ and hence $\mono(G^\prime) = |M^\prime| \leq \mono(G) +1$.
		
		Suppose that $\mono(G^\prime) = \mono(G) + 1$ with a largest mp-set $M^\prime $; then $\mono(G^\prime)\geq 3$ and $u \in M^\prime $. Since $|M^\prime| \geq 3$, it follows that $v \not \in M^\prime $, for otherwise $v$ would lie on a monophonic path from $u$ to another member of $M^\prime $ in $G^\prime $. Let $M = (M^\prime \cup \{ v \}) \setminus \{ u \} $.  $M$ is too large to be an mp-set of $G$, so there must exist three vertices $x,y,z$ of $M$ that lie on a common monophonic path in $G$. Since $M^\prime \setminus \{ u\} = M \setminus \{ v\} $ is an mp-set of $G$, one of these vertices, say $x$, must be $v$. If $y \in K[v,z]$ in $G$, then we would have $y \in K[u,z]$ in $G^\prime $, contradicting the fact that $M^\prime $ is an mp-set of $G^\prime $. Therefore $v \in K[y,z]$ and so $v$ is the only vertex in $M$ such that $v\in M^0.$ Thus $M \setminus \{ v\} $ is the required set.   
		
		Conversely, assume that there exists a maximum mp-set $M$ of $G$ with $v\not \in M$ and $ \big{(} M \cup \{ v \} \big{)}^0 = \{ v \} $. We claim that $M^\prime = M \cup \{u \} $ is an mp-set of $ G^\prime $. Suppose that there exist $x, y, z \in M^\prime $ such that $x \in K[y, z]$ in $G^\prime $. As $u$ is a leaf, $x \not = u$, so we can set $z = u$. Then $x \in K[y, v]$ and hence $ x \in \big{(} M \cup \{ v \} \big{)}^0,$ which is impossible. Thus $ M^\prime $ is an mp-set in $ G^\prime $  with $|M^\prime| = \mono(G) + 1$. 
	\end{proof}
	
	\begin{proposition}\label{proposition:Trees}
		If $G^\prime $ is a graph obtained from $G$ by adding a pendant vertex $u$ to a simplicial vertex $v$ in $G$, then $\mono(G) = \mono(G^\prime)$.
	\end{proposition}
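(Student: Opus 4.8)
The plan is to derive this immediately from Lemma~\ref{lemma:Trees}, using the simplicial-vertex observation that underlies Lemma~\ref{lemma:simplicial vertex}. Since Lemma~\ref{lemma:Trees} already gives $\mono(G) \le \mono(G') \le \mono(G)+1$, it is enough to show that the equality $\mono(G') = \mono(G)+1$ is impossible. By the characterisation in Lemma~\ref{lemma:Trees}, such an equality would require a maximum mp-set $M$ of $G$ with $v \notin M$ and $(M \cup \{v\})^0 = \{v\}$; in particular $v$ would be an m-interior vertex of $M \cup \{v\}$, i.e.\ $v \in K[M]$.

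I would then note that, because $v \notin M$, we have $v \in K[M] = \bigcup_{y,z \in M} K[y,z]$ only if $v$ is an internal vertex of some monophonic path joining two vertices $y,z$ of $M$. But $v$ is simplicial in $G$, so the two neighbours of $v$ on such a path would be adjacent, producing a chord and contradicting that the path is induced --- this is precisely the argument of Lemma~\ref{lemma:simplicial vertex}. Hence $v \notin K[M]$, so $(M \cup \{v\})^0$ does not contain $v$ and in particular is not equal to $\{v\}$. No such set $M$ exists, so $\mono(G') \ne \mono(G)+1$, and with the lower bound this forces $\mono(G') = \mono(G)$.

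I do not anticipate a real obstacle; the only care needed is in reading the ``if and only if'' of Lemma~\ref{lemma:Trees} correctly, so that negating its right-hand side genuinely excludes $\mono(G') = \mono(G)+1$, and in brushing aside the degenerate case where $v$ is isolated in $G$ (so $G' \cong K_2$), which lies outside the standing hypothesis that our graphs are connected of order at least two. As a cross-check one could argue directly instead: given a maximum mp-set $M'$ of $G'$, if the leaf $u \notin M'$ then $M'$ is already an mp-set of $G$; and if $u \in M'$ then $v \notin M'$ whenever $|M'| \ge 3$ (since $u$ is a leaf, $v$ would otherwise lie on a monophonic path from $u$ to a third member of $M'$), and swapping $u$ for $v$ yields an mp-set of $G$ of the same cardinality, again because the simplicial vertex $v$ cannot be interior to any monophonic path. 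Either route gives $\mono(G) \ge \mono(G')$ and hence equality.
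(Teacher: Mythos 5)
Your proposal is correct and follows essentially the same route as the paper: invoke Lemma~\ref{lemma:Trees} to reduce the question to whether $v$ can satisfy $\bigl(M \cup \{v\}\bigr)^0 = \{v\}$, and then rule this out because a simplicial vertex cannot be an interior vertex of any monophonic path. The additional direct cross-check you sketch is fine but not needed.
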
	
	\begin{proof}
		By Lemma~\ref{lemma:Trees} $\mono(G) \leq \mono(G^\prime ) \leq \mono(G)+1$ and if $\mono(G^\prime ) = \mono(G)+1$, then there exists an mp-set $M$ of $G$ with $|M| = \mono(G)$ such that $v \not \in M$ and $\big{(} M \cup \{ v \} \big{)}^0 = \{ v \} $.  However, as a simplicial vertex $v$ cannot be an interior vertex of any monophonic path in $G$, we have $\mono(G) = \mono(G^\prime )$.   
	\end{proof}

	\section{Monophonic position in graph families}\label{sec:triangle-free} 
	
	In this section we discuss the monophonic position numbers of some common graph families. In Theorem~\ref{triangle-free} we give a sharp bound for the mp-numbers of triangle-free graphs. We then give exact expressions for the mp-numbers of unicyclic graphs and graphs formed as the join or corona product of two graphs.

	\begin{lemma}\label{lemma:Bipartite graphs}
		Let $G$ be a connected graph and $M\subseteq V(G)$ be an mp-set. Then $G[M]$ is a disjoint union of $k$ cliques $G[M]=\bigcup_{i=1}^{k} W_i $. If $k \geq 2$, then for $1 \leq i \leq k$ any two vertices of $W_i$ have a common neighbour in $G \setminus M$.
	\end{lemma}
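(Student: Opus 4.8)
The plan is to establish the two assertions in turn. For the first, I would argue that $G[M]$ has no induced path on three vertices: if $u,v,w \in M$ with $u \sim v \sim w$ and $u \not\sim w$ in $G[M]$, then, as $G[M]$ is an induced subgraph, the same adjacencies hold in $G$, so $u,v,w$ lie on a common monophonic path of $G$ -- contradicting that $M$ is an mp-set. A graph with no induced three-vertex path is a disjoint union of cliques, so $G[M] = \bigcup_{i=1}^{k} W_i$ with each $W_i$ a clique. I also record a consequence used repeatedly below: since the $W_i$ are distinct components of $G[M]$, there is no edge of $G$ joining $W_i$ to $W_j$ when $i \neq j$.

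For the second assertion, fix $i$, take two distinct vertices $x,y \in W_i$, and use the hypothesis $k \geq 2$ to pick some $z \in W_j$ with $j \neq i$. By the previous remark $z \not\sim x$, so any monophonic $z$--$x$ path $Q \colon z = q_0, q_1, \dots, q_\ell = x$ has length $\ell \geq 2$; moreover, as $M$ is an mp-set and $z,x \in M$, the path $Q$ contains no vertex of $M$ besides its endpoints, so $q_1, \dots, q_{\ell-1} \in V(G)\setminus M$ and $y \notin V(Q)$. I claim that $q_{\ell-1}$ is adjacent to $y$; granting this, $q_{\ell-1}$ is a common neighbour of $x = q_\ell$ and $y$ lying in $G - M$, which is exactly what is wanted.

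To prove the claim, let $s$ be the least index with $y \sim q_s$; this exists because $y \sim x = q_\ell$, and $s \geq 1$ because $y$ and $z = q_0$ lie in different cliques and hence are non-adjacent. Suppose $s \leq \ell - 2$. Minimality of $s$ gives $y \not\sim q_0,\dots,q_{s-1}$, and since $Q$ is induced, $x = q_\ell \not\sim q_0, \dots, q_s$; hence $q_0, q_1, \dots, q_s, y, x$ is an induced path of $G$ containing the three vertices $z, y, x$ of $M$, a contradiction. The same reasoning applied to the sequence $q_0, \dots, q_\ell, y$ excludes $s = \ell$. Therefore $s = \ell - 1$, so $y \sim q_{\ell-1}$, and the claim follows.

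The first assertion and the bookkeeping about which of the exhibited sequences are induced paths are routine; the one genuinely delicate point is the extremal choice of the index $s$ and the verification that appending $y$ and then $x$ to a prefix of $Q$ yields an induced path through three elements of $M$. This is also the step where the hypothesis $k \geq 2$ is essential, since the auxiliary vertex $z$ from a second clique is precisely what makes such a path available.
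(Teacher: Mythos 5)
Your proof is correct and follows essentially the same route as the paper's: both arguments take a monophonic path from a vertex of $W_i$ to a vertex of a second component, make an extremal choice of the index of a chord joining the second clique vertex to that path, and exhibit an induced path through three vertices of $M$. The only cosmetic differences are that you argue directly and pinpoint the penultimate vertex of the path as the common neighbour, whereas the paper phrases the same chord analysis as a proof by contradiction.
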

	\begin{proof}
		Let $W_1, W_2,\dots,W_k$ be the components of $G[M]$. If some $W_i$ is not a clique, then it would contain an induced path of length two, which is impossible; hence $M$ is an independent union of cliques. Let $k \geq 2$ and suppose for a contradiction that there is a component, say $W_1$, with $u,v\in V(W_1)$ such that $u$ and $v$ have no common neighbour in $G \setminus M$. Let $w$ be any vertex in $W_2$ and let $P$ be a $u,w$-monophonic path $u,u_1,u_2, \dots, u_{\ell} = w$ in $G$. Then $\ell \geq 2$. Since $M$ is an mp-set, it follows that $P$ together with the edge $uv$ is not a monophonic path in $G$. This shows that $v$ must be adjacent to $u_j$ for some $j$ with $1 \leq j \leq \ell-1$. It follows from the choice of $u$ and $v$ that $j \geq 2$; let $j$ be the largest suffix for which $v$ is adjacent to $u_j$. Then the $u_j,w$-subpath of $P$ together with the 2-path $u_j,v,u$ forms a monophonic path containing three points of $M$; hence $u$ and $v$ must have a common neighbour in $G \setminus M$. 
	\end{proof}

	\begin{theorem}\label{triangle-free}
		The mp-number of a connected triangle-free graph $G$ with order $n \geq 3$ satisfies $\mono(G) \leq \alpha(G)$. Moreover, if the length of any monophonic path is at most three, then $\mono(G) = \alpha(G)$.
	\end{theorem}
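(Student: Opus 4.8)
The plan is to handle the two assertions separately: the inequality $\mono(G)\le\alpha(G)$ follows quickly from Lemma~\ref{lemma:Bipartite graphs} once triangle-freeness is exploited, and the equality in the ``moreover'' part reduces to the observation that a monophonic path of length at most three cannot carry three pairwise non-adjacent vertices.

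For the bound, I would start with a maximum mp-set $M$ and apply Lemma~\ref{lemma:Bipartite graphs} to write $G[M]=\bigcup_{i=1}^{k}W_i$ as a disjoint union of cliques. Since $G$ is triangle-free, each $W_i$ has at most two vertices. If $k\ge 2$, I claim each $W_i$ is a single vertex: otherwise some $W_i=\{u,v\}$ with $u\sim v$, and Lemma~\ref{lemma:Bipartite graphs} then supplies a common neighbour $x\in G-M$ of $u$ and $v$, so that $\{u,v,x\}$ induces a triangle, a contradiction. Hence in this case $M$ is an independent set and $|M|=k\le\alpha(G)$. If instead $k=1$, then $|M|=|W_1|\le 2$; and since a connected triangle-free graph of order at least $3$ is not complete, $\alpha(G)\ge 2\ge|M|$. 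In both cases $\mono(G)=|M|\le\alpha(G)$.

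For the equality, assume every monophonic path of $G$ has length at most three, let $I$ be a maximum independent set, and suppose for contradiction that $I$ is not an mp-set, so that some $x,y,z\in I$ lie on a common monophonic path $P$, say with $y$ between $x$ and $z$ along $P$. The $x,z$-subpath $Q$ of $P$ through $y$ is again a monophonic path, being a contiguous subpath of an induced path. As $x,y,z$ are pairwise non-adjacent, $x$ and $y$ lie at distance at least $2$ on $Q$, and likewise $y$ and $z$, so $Q$ has length at least $4$ --- contradicting the hypothesis. Hence $I$ is an mp-set, $\mono(G)\ge\alpha(G)$, and combined with the first part $\mono(G)=\alpha(G)$.

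I do not expect a genuine obstacle; the only points needing attention are the degenerate case $k=1$, where one must record that $\alpha(G)\ge 2$ for connected triangle-free graphs of order at least three, and the elementary check (most cleanly done by listing the triples of vertices of $P_2$ and $P_3$) that no monophonic path of length at most three contains three vertices of an independent set.
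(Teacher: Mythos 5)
Your proposal is correct and follows essentially the same route as the paper: Lemma~\ref{lemma:Bipartite graphs} plus triangle-freeness forces a maximum mp-set to be independent (with the same $k=1$ clique case handled separately), and the converse direction rests on the same observation that an induced path carrying three pairwise non-adjacent vertices must have length at least four. Your treatment of the $k=1$ case is in fact slightly more careful than the paper's, which asserts $2\le\alpha(G)$ without comment.
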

	\begin{proof}
		Let $M$ be a maximum mp-set in a triangle-free graph $G$. By Lemma~\ref{lemma:Bipartite graphs} $M$ is an independent union of cliques $W_1, W_2,\dots,W_k$. If $k = 1$, then $G[M]$ is a clique and so $\mono(G) = |M| \leq \omega (G) = 2 \leq \alpha (G),$ so assume that $k \geq 2.$ If any component $W_i$ of $G[M]$ contains distinct vertices $u,v$, then by Lemma~\ref{lemma:Bipartite graphs} $u$ and $v$ have a common neighbour in $G \setminus M$, so that $G$ would contain a triangle. It follows that each component of $G[M]$ consists of a single vertex, so that $M$ is an independent set of $G$ and $\mono(G) \leq \alpha (G)$.  
		
		Let $T$ be a maximum independent set of $G$. If three vertices of $T$ lie on a common monophonic path $P$, then the length of $P$ must be at least four. Therefore if the longest monophonic path of $G$ has length at most three, then we have equality in the bound and any maximum independent set of $G$ will be a maximum mp-set.
	\end{proof}
	The bound in Theorem~\ref{triangle-free} is sharp; it is met by the caterpillar formed by adjoining one leaf to every internal vertex of a path, complete bipartite graphs $K_{r,r}$ and the corona product $C_n \odot K_1$ for $n \geq 4$. Hence this bound can be achieved by graphs with arbitrarily large diameter, minimum degree and girth. 
	
	Theorem~\ref{triangle-free} can be used to find the monophonic position numbers of some graphs with large girth. This problem is particularly interesting for the cage graphs. The unique cubic cages with girths five, six and seven are the Petersen graph, the Heawood graph and the McGee graph respectively. We omit the lengthy case argument used to prove the following result, which was checked computationally by Erskine~\cite{Erskine}.
	\begin{theorem}
		The monophonic position numbers of the Petersen graph, the Heawood graph and the McGee graph are three, three and two respectively.
	\end{theorem}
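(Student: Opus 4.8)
The plan is to prove matching lower and upper bounds on $\mono(G)$ for each of the three cages. We use throughout the elementary fact that in a graph of girth $g$ every path of length at most $g-2$ is induced, since a chord of such a path would close a cycle of length less than $g$; in particular every geodesic of the Petersen, Heawood and McGee graphs (which have diameters $2$, $3$ and $4$) is a monophonic path.

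\textbf{Lower bounds.} Since every pair of vertices is an mp-set, nothing is needed for the McGee graph. For the Petersen and Heawood graphs take $M = N(v)$ for an arbitrary vertex $v$; since these graphs are triangle-free, $M$ is an independent set of size $3$, and since they contain no $4$-cycle, $v$ is the unique common neighbour of each pair of vertices of $M$. Suppose a monophonic path $P$ contains all of $M$. If $v\in P$, then $v$ is adjacent to all three vertices of $M$ but has at most two neighbours on $P$, so some vertex of $M$ lies at distance at least $2$ from $v$ along $P$, giving a chord --- a contradiction. Hence $v\notin P$; then no two vertices of $M$ can lie at distance $2$ on $P$ (their only common neighbour $v$ is absent), and none are adjacent, so the three vertices of $M$ are pairwise at distance at least $3$ along $P$, which forces $P$ to have at least seven vertices. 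For the Petersen graph this is already impossible: an induced path on seven vertices would be the Petersen graph with three vertices deleted, a degree count forces the deleted set to be an independent dominating set of size $3$, and in the Kneser model the only such sets are the ``triangles'' $\{x,y\},\{x,z\},\{y,z\}$, whose deletion isolates the complementary pair $\{d,e\}$. For the Heawood graph one rules out the possible embeddings of the three concurrent lines $M=N(v)$ into a long induced path by a short analysis of the Fano-plane incidences: each line that links two of these lines along $P$ has a third point that must then be a chord. Hence $\mono(G)\ge 3$ in both cases.

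\textbf{Upper bounds.} We first reduce to independent sets: if $M$ is an mp-set of size $4$ in the Petersen or Heawood graph then, by Lemma~\ref{lemma:Bipartite graphs}, $G[M]$ is a disjoint union of cliques, each of order at most $2$ since the graph is triangle-free, so $G[M]$ has at least two components; Lemma~\ref{lemma:Bipartite graphs} then forces the two endpoints of any edge of $G[M]$ to have a common neighbour, producing a triangle. So $M$ must be independent. For the Petersen graph $\alpha(G)=4$ and its five maximum independent sets form a single orbit under $\mathrm{Aut}(G)\cong S_5$, so it is enough to check one of them --- say the Kneser ``star'' $\{\,\{1,2\},\{1,3\},\{1,4\},\{1,5\}\,\}$, which carries the induced path $\{1,2\},\{4,5\},\{1,3\},\{2,5\},\{1,4\}$ through three of its members --- giving $\mono(G)\le 3$. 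For the Heawood graph one must show that \emph{every} independent $4$-set contains three vertices on a common monophonic path; using distance-transitivity and the point--line duality, such a set falls into a bounded number of automorphism classes (determined by its split between the colour classes and by its incidence pattern), each handled by a short monophonic path --- for instance any four points contain a non-collinear triple $p,q,r$, which lies on the induced path through $p$, the line $pq$, $q$, the line $qr$, $r$. Finally, for the McGee graph one must prove that \emph{every} triple of vertices lies on a common monophonic path, so $\mono(G)\le 2$: fixing one vertex by vertex-transitivity and splitting on the three pairwise distances (each in $\{1,2,3,4\}$), in each case either one vertex already lies on a geodesic between the other two --- which, having length at most $4\le 5=\mathrm{girth}(G)-2$, is monophonic --- or an induced path is assembled by joining two geodesics through a carefully chosen middle vertex, using that every path of length at most $5$ is induced.

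\textbf{Main obstacle.} The hard part is the case analysis for the Heawood and especially the McGee graph: unlike the Petersen graph, these admit too many inequivalent small vertex subsets for symmetry alone to collapse the problem to one verification, and the McGee graph has an automorphism group of order only $32$, so the relevant induced paths genuinely must be produced by hand. The sharpest difficulty is the McGee case in which all three pairwise distances equal the diameter $4$: the obvious concatenation of two geodesics then has length $8$, well above the threshold $g-2$ for automatic inducedness, and the path must be chosen carefully to avoid chords. This is presumably why the authors chose to omit the full argument.
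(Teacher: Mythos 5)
The paper gives no proof of this theorem --- it explicitly omits ``the lengthy case argument'' and the acknowledgements indicate the cage computations were machine-assisted --- so there is nothing to compare your argument against except the bare claim. Much of what you write is genuinely correct and attractive. The reduction of a putative $4$-element mp-set to an independent set via Lemma~\ref{lemma:Bipartite graphs} is exactly the mechanism of Theorem~\ref{triangle-free}. The Petersen argument is complete: $N(v)$ is an mp-set because the spacing/common-neighbour analysis forces an induced $P_7$, which the edge count rules out (deleting $3$ vertices from $15$ edges leaves $6$ only if the deleted set is independent and dominating, and the only such sets are the Kneser ``triangles'', whose removal isolates the complementary pair); and the exhibited induced path $\{1,2\},\{4,5\},\{1,3\},\{2,5\},\{1,4\}$ kills the unique orbit of independent $4$-sets. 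The Heawood sketch is also completable: the parity and edge-count bounds force the three concurrent lines to sit at gaps exactly $4,4$ on any common induced path, and the Fano incidences then produce a chord; and your observation that any non-collinear triple $p,q,r$ lies on the induced path $p,\,pq,\,q,\,qr,\,r$ does most of the work for the upper bound (the remaining splits $(3,1)$ and $(2,2)$ need, and admit, short ad hoc arguments).

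The genuine gap is the McGee graph, which is the bulk of the theorem's content. Showing $\mono(\mathrm{McGee})=2$ means showing that \emph{every} one of the $\binom{24}{3}=2024$ triples of vertices lies on a common induced path, and your proposal does not do this for a single nontrivial triple. Worse, the proposed organisation of the case analysis does not work: the McGee graph is vertex- but not edge-transitive and has $|\mathrm{Aut}|=32$, so there are at least $\lceil 2024/32\rceil=64$ orbits of triples, while there are only $20$ multisets of three pairwise distances from $\{1,2,3,4\}$ --- the distance profile does not determine the orbit, and whether a connecting induced path exists genuinely depends on finer structure. Moreover, ``assemble an induced path by joining two geodesics through a carefully chosen middle vertex'' is precisely the step that fails for some triples in the Petersen and Heawood graphs (which is why their mp-numbers are $3$ rather than $2$), so its success for every McGee triple cannot be taken for granted; it must be exhibited orbit by orbit or verified by computer. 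As written, your text proves $\mono(\mathrm{Petersen})=3$, very nearly proves $\mono(\mathrm{Heawood})=3$, and merely restates $\mono(\mathrm{McGee})=2$ as a task.
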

	This motivates the following conjecture.
	\begin{conjecture}
		For sufficiently large $g$, the monophonic position number of a $(d,g)$-cage $G$ satisfies $\mono (G) < d$.
	\end{conjecture}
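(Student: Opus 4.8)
As the final statement is a conjecture, we sketch a line of attack rather than a complete proof, and indicate where the real difficulty lies.

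\emph{Reduction to the triangle-free setting, and why the obvious bounds are useless here.} A $(d,g)$-cage with $g\geq 4$ is triangle-free, so Theorem~\ref{triangle-free} gives $\mono(G)\leq\alpha(G)$, and combining Lemma~\ref{lemma:Bipartite graphs} with triangle-freeness shows that as soon as $\mono(G)\geq 3$ every maximum mp-set $M$ is independent: a two-vertex component of $G[M]$ would force those two vertices to share a neighbour outside $M$, creating a triangle. Likewise Proposition~\ref{n-L+1} applies, since deleting an edge of a shortest cycle produces an induced path of length $g-1$, whence $\mono(G)\leq n-g+2$. Both of these bounds are hopeless for the conjecture, because for a cage $\alpha(G)$ and $n$ are of exponential order in $g$ while we want a constant; all the content has to come from the gap between ``lies on a common induced path'' and ``is independent''.

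\emph{A spreading constraint for mp-sets in high-girth graphs.} Suppose $u,v,w\in M$ are distinct with $d(u,v)+d(v,w)\leq g-2$, and let $P_1$ be a shortest $u,v$-path and $P_2$ a shortest $v,w$-path. Then $P_1P_2$ is an induced $u,w$-path through $v$: a repeated vertex, or a chord joining a vertex of $P_1$ to a vertex of $P_2$, would produce a cycle of length at most $d(u,v)+d(v,w)+1\leq g-1<g$, which is impossible, while $P_1$ and $P_2$ are separately chordless as shortest paths. This induced path meets $M$ in three vertices, contradicting that $M$ is an mp-set. Hence for every triple of vertices of $M$, and every choice of a ``middle'' vertex of that triple, the two distances to it sum to at least $g-1$; in particular, whenever $|M|\geq 3$, any two vertices of $M$ are at distance at least $g-1-\diam(G)$ apart. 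Taking $r$ to be roughly $(g-1-\diam(G))/2$, the balls $B_r(x)$, $x\in M$, are pairwise disjoint and each is a tree, so $|B_r(x)|\geq 1+d+d(d-1)+\dots+d(d-1)^{r-1}$. This yields a counting bound $\mono(G)\leq n/(1+d+d(d-1)+\dots+d(d-1)^{r-1})$.

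\emph{The main obstacle.} The counting bound just obtained, and indeed any argument based purely on sparsity, is exponentially far from the target: the order $n$ of a $(d,g)$-cage lies between about $(d-1)^{g/2}$ and $(d-1)^{3g/4}$, and the diameter of cages is not well understood, so the displayed bound is still exponential in $g$ rather than a constant. A proof of $\mono(G)<d$ must therefore use that a cage is \emph{extremal}. The plan would be to exploit that a cage looks like the infinite $d$-regular tree within a ball of radius about $g/2$ around any vertex, while its excess over the Moore bound forces extra short cycles whose edges can serve as detours, and then to prove directly that \emph{every} set of $d$ vertices lies on a common induced path — obtained by stitching together shortest-path segments and inserting such detours exactly where a naive concatenation acquires a chord or a repetition — which immediately gives $\mono(G)<d$. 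I expect this last step to be the genuine difficulty, since controlling the global structure of cages is notoriously hard (even the order of a general $(d,g)$-cage is unknown), so a complete proof would most plausibly need a new structural theorem about cages, or at least a good upper bound on their diameter together with a routing argument considerably finer than the above counting. A sensible first target is the families in which cages are understood, such as the incidence graphs of generalised polygons, which are $(d,g)$-cages for $g\in\{6,8,12\}$; a proof there would also be consistent with the known cubic cases $g\in\{5,6,7\}$, where $\mono$ equals $3,3,2$.
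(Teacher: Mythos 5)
This statement is stated in the paper as a conjecture and the paper offers no proof of it, so there is nothing to compare your attempt against; you correctly recognise this and do not claim to settle it, which is the right call. Your framing of the problem is sound: the reduction to the triangle-free case via Theorem~\ref{triangle-free} and Lemma~\ref{lemma:Bipartite graphs} (so that any mp-set of size at least $3$ in a cage with $g\geq 4$ is independent), the observation that $\alpha(G)$ and $n-L+1$ are exponentially too weak, and the diagnosis that any proof must exploit extremality of cages rather than mere sparsity, are all consistent with the evidence the paper gives (the cubic cages of girth $5,6,7$ having $\mono$ equal to $3,3,2$).

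One intermediate claim in your sketch is, however, genuinely wrong: the ``spreading constraint''. You assert that if $u,v,w\in M$ with $d(u,v)+d(v,w)\leq g-2$, then concatenating a $u,v$-geodesic with a $v,w$-geodesic yields an induced $u,w$-path through $v$. The chord analysis is fine, but the concatenation need not be a \emph{path}: the two geodesics may overlap on a common segment ending at $v$, in which case no short cycle is created and no contradiction arises. The star $K_{1,3}$ (girth infinite) with $u,v,w$ its three leaves already defeats the claim: $d(u,v)+d(v,w)=4$ yet $\{u,v,w\}$ is an mp-set and $v$ lies on no induced $u,w$-path. Consequently the derived lower bound $d(x,y)\geq g-1-\diam(G)$ for $x,y\in M$, and the ball-packing estimate built on it, do not follow as stated; to salvage anything one would have to handle the case where every $u,v$-geodesic and every $v,w$-geodesic share the same neighbour of $v$. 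Since you only use this to argue that counting bounds are hopeless — a conclusion that remains true — the flaw does not affect your overall (correct) verdict that the conjecture is open, but it should not be presented as a proved lemma.
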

	
	We now determine the mp-numbers of unicyclic graphs; as we have seen, such graphs can meet the upper bound in Theorem~\ref{triangle-free}. We will identify the vertex set of the unique cycle $C$ of a unicyclic graph $G$ with $\mathbb {Z}_s$, where the length of the cycle is $s \geq 3$, where $i \sim i+1$ for $0 \leq i \leq s-1 \pmod s$. We denote by $R$ the set of vertices of $C$ that have degree $\geq 3$. We will call any vertex in $R$ a \emph{root} and if $i \in R$ we write $T_i$ for the tree in $G \setminus \{ i-1,i+1\} $ that contains $i$. The leaf number of the tree $T_i$ is $\ell (T_i)$ and we write $\ell ^{\prime }(T_i)$ for the number of leaves of $T_i$ that are also leaves of $G$. We have already dealt with the case that $G$ is a cycle ($\mono (K_3) = 3$, $\mono (C_s) = 2$ for $s \geq 4$), so we assume that $|R| > 0$.  
	
	\begin{theorem}
		Let $G$ be a unicyclic graph that is not a cycle. Then $$\mono(G) =
		{
			\begin{cases}
				\ell(G)+2, & \text{ if } r = 1,\\
				\ell(G)+1, & \text{ if } r = 2, R = \{ i,j\} \text{ and either } T_i \text{ or } T_j \text{ is a path},\\
				\ell(G)+1, & \text{ if } r = 2 \text{ and } R = \{ i,i+1\} \text{ for some } 0 \leq i \leq s-1,\\
				\ell(G), & \text{ otherwise.}
		\end{cases}}$$
	\end{theorem}
	\begin{proof}
		Let $M$ be a maximum mp-set of $G$; by Lemma~\ref{lemma:cut vertex} we can assume that $M$ contains no cut-vertices, so that $M \cap T_i$ consists of leaves for any tree $T_i$ (in particular $M \cap (C \setminus R) = \emptyset $). It follows from Corollary~\ref{cor:block graphs} and the mp-numbers of the cycles that $\ell (G) \leq \mono (G) \leq \ell (G)+2$. If $|R| = 1$, then the leaves of the unique tree $T_i$ and the vertices $\{ i-1,i+1\} $ form an mp-set, so that the upper bound is achieved, so we can take $|R| \geq 2$. 
		
		Suppose that $\mono (G) = \ell (G)+2$. Then $M$ must contains $\ell ^{\prime }(T_i)$ vertices in each tree $T_i$ for $i \in R$, as well as two vertices of $C \setminus R$. As we are assuming that $|R| \geq 2$, this is not possible, for either there is a monophonic path from a vertex of $M$ in a tree $T_i$ through a section of $C$ containing two vertices of $M$, or else a monophonic path from a vertex of $M$ in a tree $T_i$ to a vertex of $M$ in a tree $T_j$, $i \not = j$, through a vertex of $M$ on $C$. Thus for $|R| \geq 2$ we have $\mono (G) \leq \ell (G)+1$. 
		
		Suppose that $\mono (G) = \ell (G)+1$. Then either $|M \cap (C \setminus R)| = 1$ and $|M \cap T_i| = \ell ^{\prime }(T_i)$ for each $i \in R$, or else $|C \setminus R| = 2$ and there is one tree $T_i$ with $|M \cap T_i| = \ell ^{\prime }(T_i) -1$. If $|M \cap (C \setminus R)| = 2$, then there is a unique tree $T_i$ that contains a vertex of $M$ and the vertices of $M \cap (C \setminus R)$ are $\{ i-1,i+1\} $, so that, since no other tree contains a vertex of $M$, we must have $|R| = 2$ and the other tree $T_j$ is a path, in which case equality holds in $\mono (G) = \ell(G) +1$. Hence assume that every tree $T_i$ has $|M \cap T_i| = \ell ^{\prime }(T_i)$ and $|M \cap (C \setminus R) = 1$. If there are two trees $T_i, T_j$ such that $i \not \sim j$ on $C$, then there would be a monophonic path from $M \cap T_i$ to $M \cap T_j$ through the vertex of $M$ on $C \setminus R$, so either $|R| = 2$ or $s = |R| = 3$; in the latter case trivially $\mono (G) = \ell (G)$. If $R = \{ i,i+1\} $, then the leaves of $T_i$ and $T_{i+1}$ together with the vertex $i-1$ of $C$ constitute an $\mono $-set of $G$, so that we have $\mono (G) = \ell (G)+1$ in this case; otherwise $\mono (G) = \ell (G)$.
	\end{proof}

	In~\cite{Ghorbani-2020} the authors determined the gp-number of the join and corona product of graphs.
	
	\begin{theorem}[\cite{Ghorbani-2020}]
		For graphs $G$ and $H$ the general position number of the join $G \vee H$ and the corona product $G \odot H$ are given by 
		\[ \gp(G \vee H) = \max\{\omega (G) + \omega (H), \alpha ^{\omega }(G), \alpha ^{\omega }(H)\}\]
		and if $G$ is connected and has order $n(G)$, then
		\[ \gp(G \odot H) =n(G) \alpha ^{\omega }(H).\]	
	\end{theorem}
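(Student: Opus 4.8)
The plan is to handle the two products by different means: the join reduces at once to Theorem~\ref{thm:diameter2}, whereas the corona needs a direct analysis of its geodesics. I assume throughout that $G$ and $H$ each have at least one vertex, and, for the corona, that $n(G)\geq 2$.

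\emph{The join.} Since $G$ and $H$ are non-empty, $\diam(G\vee H)\leq 2$, and in a graph of diameter at most two three vertices lie on a common geodesic exactly when they induce a three-vertex path; hence (a reformulation of Theorem~\ref{thm:diameter2}) the general position sets of $G\vee H$ are precisely the vertex sets inducing a disjoint union of cliques, so $\gp(G\vee H)=\alpha^{\omega}(G\vee H)$. I would then evaluate $\alpha^{\omega}(G\vee H)$ by cases. Let $S=S_G\cup S_H$ induce a disjoint union of cliques, with $S_G\subseteq V(G)$ and $S_H\subseteq V(H)$. If both $S_G$ and $S_H$ are non-empty, then two non-adjacent vertices of $S_G$ would, together with any vertex of $S_H$, induce a three-vertex path; so $S_G$ is a clique of $G$, likewise $S_H$ is a clique of $H$, $S$ is a clique, and $|S|\leq\omega(G)+\omega(H)$. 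If $S_H=\emptyset$ (resp.\ $S_G=\emptyset$) then $|S|\leq\alpha^{\omega}(G)$ (resp.\ $\alpha^{\omega}(H)$). Each of the three bounds is realised --- the first by a union of maximum cliques, the others by maximum independent unions of cliques of $G$ and of $H$ --- so $\gp(G\vee H)=\max\{\omega(G)+\omega(H),\alpha^{\omega}(G),\alpha^{\omega}(H)\}$.

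\emph{The corona.} Write $V(G)=\{v_1,\dots,v_n\}$ and $W_i=V(H_i)$. First I would record the distance structure of $G\odot H$: each $v_i$ is a cut-vertex separating $W_i$ from the rest of the graph, so $d(x,x')=\min\{d_H(x,x'),2\}$ for $x,x'\in W_i$, and every geodesic joining a vertex of $W_i$ to a vertex outside $W_i\cup\{v_i\}$ runs through $v_i$ and meets $W_i$ only in its endpoint. Consequently every geodesic of $G\odot H$ either lies inside some $(G\odot H)[W_i\cup\{v_i\}]\cong K_1\vee H$ (a graph of diameter at most two) or meets each $W_i$ in at most one vertex. For the lower bound, pick in each $H_i$ a maximum set $A_i$ inducing a disjoint union of cliques, so $|A_i|=\alpha^{\omega}(H)$, and set $S=\bigcup_i A_i$; a geodesic of the first kind has at most three vertices, of which at most two lie in $A_i$ (three vertices of $A_i$ on it would form an induced three-vertex path), while a geodesic of the second kind carries at most two vertices of $S\subseteq\bigcup_i W_i$, so $S$ is a general position set and $\gp(G\odot H)\geq n\,\alpha^{\omega}(H)$.

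\emph{The corona, upper bound.} Let $S$ be any general position set. As in the lower bound, $S\cap W_i$ induces a disjoint union of cliques in $H_i$, so $|S\cap W_i|\leq\alpha^{\omega}(H)$; the delicate step is to control those $v_i$ belonging to $S$. If $v_i\in S$ and some $a\in S\cap W_i$, then for every $b\in S\setminus(W_i\cup\{v_i\})$ the geodesic from $a$ to $b$ through $v_i$ contains the three vertices $a,v_i,b$ of $S$, which is impossible; hence $S\subseteq W_i\cup\{v_i\}$ and, by the join formula applied to $(G\odot H)[W_i\cup\{v_i\}]\cong K_1\vee H$, $|S|\leq\max\{1+\omega(H),\alpha^{\omega}(H)\}\leq 2\,\alpha^{\omega}(H)\leq n\,\alpha^{\omega}(H)$. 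Otherwise $S\cap W_i=\emptyset$ whenever $v_i\in S$, so with $S_0=S\cap V(G)$ we get $|S|=|S_0|+\sum_{v_i\notin S}|S\cap W_i|\leq|S_0|+(n-|S_0|)\,\alpha^{\omega}(H)\leq n\,\alpha^{\omega}(H)$ since $\alpha^{\omega}(H)\geq 1$. In either case $|S|\leq n\,\alpha^{\omega}(H)$, giving equality. I expect this last point to be the crux: one must see that placing a cut-vertex $v_i$ in the position set simultaneously forces $S\cap W_i$ to be a clique and pins all of $S$ inside $W_i\cup\{v_i\}$, and then check that neither effect can push $|S|$ above $n\,\alpha^{\omega}(H)$; the underlying rigidity of geodesics in $G\odot H$ is routine to verify but needs care.
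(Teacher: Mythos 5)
The paper imports this theorem from~\cite{Ghorbani-2020} and gives no proof of its own, so there is no internal argument to compare against; judged on its own terms, your proof is correct. The join half is the natural reduction: $\diam(G\vee H)\le 2$, so by Theorem~\ref{thm:diameter2} the general position sets are exactly the sets inducing disjoint unions of cliques, and your case analysis of $\alpha^{\omega}(G\vee H)$ is right. The corona half is also sound, and you correctly spotted that the formula as printed fails for $n(G)=1$ (e.g.\ $K_1\odot K_m=K_{m+1}$ has $\gp$-number $m+1$, not $m$), so the restriction $n(G)\ge 2$ you impose is genuinely needed. Two small points to tighten. (i) In the lower bound you need that a geodesic not contained in any $W_i\cup\{v_i\}$ meets $\bigcup_i W_i$ \emph{only in its endpoints}; the weaker statement ``at most one vertex of each $W_i$'' would still allow three vertices of $S$ spread over three different copies of $H$. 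The stronger claim follows from the same cut-vertex observation you already made, since an interior visit to some $W_i$ would force the path through $v_i$ twice. (ii) In the case $v_i\in S$ and $S\cap W_i\ne\emptyset$, invoking the join formula on $(G\odot H)[W_i\cup\{v_i\}]$ requires noting that geodesics of $G\odot H$ between vertices of $W_i\cup\{v_i\}$ coincide with geodesics of $K_1\vee H$, so that general position transfers to the induced subgraph; alternatively the cruder bound $|S|=1+|S\cap W_i|\le 1+\alpha^{\omega}(H)\le n\,\alpha^{\omega}(H)$ avoids the issue entirely. Neither point affects the validity of the argument.
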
  
	We now determine the corresponding relation for the monophonic position numbers.
	
	\begin{proposition}\label{theorem: Corona of graphs}
		Let $G$ be a connected graph with order $n(G)$ and let $H$ be any graph. Then $\mono(G \odot H) = n(G)\mono(H)$.
	\end{proposition}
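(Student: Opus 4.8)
The plan is to prove the two inequalities $\mono(G\odot H)\ge n(G)\mono(H)$ and $\mono(G\odot H)\le n(G)\mono(H)$ separately, the crucial ingredient for both being an understanding of the shape of monophonic paths in $G\odot H$. Write $v_1,\dots,v_n$ for the vertices of $G$ and $H_1,\dots,H_n$ for the attached copies of $H$, so that $v_i$ is adjacent to every vertex of $H_i$. The key observation I would establish first is this: the only vertex of $G\odot H$ outside $V(H_i)$ that is adjacent to a vertex of $V(H_i)$ is $v_i$, and $v_i$ is adjacent to \emph{all} of $V(H_i)$; hence a monophonic path $P$ starting in $V(H_i)$ can only leave $V(H_i)$ through $v_i$, it must do so already on its first edge (otherwise its second vertex would lie in $V(H_i)$ and the later edge from $v_i$ to that vertex would be a chord of $P$), and it can never re-enter $V(H_i)$ afterwards, as that would require $v_i$ a second time. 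Consequently: (i) any monophonic path with both endpoints in $V(H_i)$ lies entirely inside $V(H_i)$, hence is a monophonic path of the induced copy $H_i\cong H$; and (ii) any monophonic path with endpoints in distinct copies $V(H_i),V(H_j)$ has all of its internal vertices in $V(G)$ (the same argument rules out the path dipping into any copy $V(H_l)$, since that would force a repeated use of $v_l$).

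For the lower bound I would take a maximum mp-set $M_H$ of $H$, let $M_i\subseteq V(H_i)$ be the corresponding set in the $i$-th copy, and set $M=\bigcup_{i=1}^n M_i$, so $|M|=n(G)\mono(H)$. To see that $M$ is an mp-set, suppose $a,b,c\in M$ lie on a common monophonic path and let $Q$ be the subpath joining the two outer ones of $a,b,c$; then $Q$ is monophonic and contains the third (middle) vertex in its interior. If the endpoints of $Q$ lie in the same copy $V(H_i)$, then by (i) all of $Q$ lies in $V(H_i)$, so $a,b,c$ are three vertices of $M_i$ on a monophonic path of $H$, contradicting that $M_H$ is an mp-set of $H$. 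If the endpoints of $Q$ lie in different copies, then by (ii) the middle vertex lies in $V(G)$, contradicting $M\subseteq\bigcup_i V(H_i)$. Hence $M$ is an mp-set and $\mono(G\odot H)\ge n(G)\mono(H)$.

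For the upper bound I would invoke Lemma~\ref{lemma:cut vertex}: $G\odot H$ is connected, so it has a maximum mp-set $M$ containing no cut-vertex. For $n(G)\ge2$ each $v_i$ is a cut-vertex of $G\odot H$, since deleting it separates the non-empty set $V(H_i)$ from the non-empty remainder, so $M\subseteq\bigcup_{i=1}^n V(H_i)$. For each $i$, any monophonic path of $H_i$ is also a monophonic path of $G\odot H$ because $H_i$ is an induced subgraph, so $M\cap V(H_i)$ is an mp-set of $H_i\cong H$ and $|M\cap V(H_i)|\le\mono(H)$. Summing, $|M|=\sum_{i=1}^n|M\cap V(H_i)|\le n(G)\mono(H)$, which together with the lower bound gives equality.

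The main obstacle is the structural observation (i)--(ii): it requires being careful about the two ways a monophonic path could try to leave a copy $H_i$ (re-entering it later, or passing through a third copy $H_l$) and ruling each out via the fact that $v_i$ is universal on $V(H_i)$; everything afterwards is bookkeeping. One should also keep in mind the standing assumption $n(G)\ge2$, which is exactly what makes each $v_i$ a cut-vertex; for $G=K_1$ the corona $K_1\odot H=K_1\vee H$ behaves differently.
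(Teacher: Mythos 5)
Your proposal is correct and follows essentially the same route as the paper: the upper bound via Lemma~\ref{lemma:cut vertex} (every $v_i$ is a cut-vertex of $G\odot H$, so a maximum mp-set can be taken inside $\bigcup_{i}V(H_i)$, and each trace $M\cap V(H_i)$ is an mp-set of $H_i\cong H$) and the lower bound by taking the union over all copies of a maximum mp-set of $H$. You merely spell out the structural facts about monophonic paths in $G\odot H$ that the paper dismisses as ``easily seen'', and you are right to flag the implicit assumption $n(G)\ge 2$, without which the statement fails (e.g.\ $K_1\odot K_m=K_{m+1}$ has mp-number $m+1$, not $m$).
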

	\begin{proof}
		Let $V(G) = \{v_1 ,\dots, v_n\} $ and let $H_1,\dots,H_n$ be the corresponding copies of $H$ in $G \odot H$. Let $M$ be a maximum mp-set of $G \odot H$. By Lemma~\ref{lemma:cut vertex} we can assume that $M$ does not contain any cut-vertices of $G \odot H$. Observe that every vertex of $G$ is a cut-vertex in $G \odot H$, so that we can take $M$ to lie entirely in $H_1 \cup H_2 \cup \dots \cup H_n$.  It is easily seen that each set $M \cap V(H_i)$ must be in monophonic position; it follows that $|M| \leq n(G)\mono(H)$. Conversely, if $S$ is a maximum mp-set of $H$ and for $1 \leq i \leq n$ the corresponding set in $H_i$ is $S_i$, then $S_1 \cup S_2 \cup \dots \cup S_n$ is in monophonic position. Therefore $\mono(G \odot H) = n(G)\mono(H)$. 
	\end{proof}
	
	\begin{proposition}\label{lemma:join}
		The monophonic position number of the join $G \vee H$ of graphs $G$ and $H$ is related to the monophonic position numbers of $G$ and $H$ by
		\[ \mono(G \vee H) = \max\{\omega (G) + \omega (H), \mono(G), \mono(H) \} .\]
	\end{proposition}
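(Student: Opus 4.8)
The plan is to prove matching bounds, the key ingredient being a description of the induced paths of $G \vee H$. First I would observe that a maximum clique of $G$ together with a maximum clique of $H$ is a clique of order $\omega(G) + \omega(H)$ in $G \vee H$, hence an mp-set, which gives $\mono(G \vee H) \geq \omega(G) + \omega(H)$; I would also check that an mp-set of $G$ (respectively $H$) lying in $V(G)$ (respectively $V(H)$) remains an mp-set of $G \vee H$, using the structural fact below, so that $\mono(G \vee H) \geq \max\{\mono(G), \mono(H)\}$ as well. Hence $\mono(G \vee H) \geq \max\{\omega(G)+\omega(H),\mono(G),\mono(H)\}$.

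The crux is the following claim: every induced path of $G \vee H$ either lies entirely in $G$, or lies entirely in $H$, or has length at most $2$. To see this, take an induced path $P = x_0 x_1 \cdots x_\ell$; if two vertices of $P$ at distance $2$ along $P$ lay on opposite sides of the join they would be adjacent in $G \vee H$, a chord contradicting that $P$ is induced, so each consecutive triple of $P$ stays on one side, and propagating this forces all of $V(P)$ onto one side once $\ell \geq 3$. Consequently the induced paths of $G \vee H$ that meet both $V(G)$ and $V(H)$ are precisely the join edges and the length-$2$ paths $u\,w\,v$ with $u,v$ in one part (non-adjacent there) and $w$ in the other; in particular, an induced path carrying three vertices of $V(G)$ must itself be an induced path of $G$, which justifies the lower-bound step above.

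For the upper bound, let $M$ be a maximum mp-set of $G \vee H$. If $M \subseteq V(G)$ then $M$ is an mp-set of $G$ by the preceding remark, so $|M| \leq \mono(G)$; symmetrically if $M \subseteq V(H)$. Otherwise $M$ meets both parts; fixing $w \in M \cap V(H)$, any two non-adjacent $u, v \in M \cap V(G)$ would put three vertices of $M$ on the induced path $u\,w\,v$, so $M \cap V(G)$ is a clique of $G$, and symmetrically $M \cap V(H)$ is a clique of $H$, whence $|M| \leq \omega(G) + \omega(H)$. Thus $|M| \leq \max\{\omega(G) + \omega(H), \mono(G), \mono(H)\}$ in all cases, matching the lower bound. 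I expect the only point needing care to be the propagation argument in the structural claim — making sure the ``one side'' conclusion really does hold for all $\ell \geq 3$ and that the short exceptional paths are correctly enumerated; everything else is then immediate.
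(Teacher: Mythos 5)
Your proof is correct and follows essentially the same route as the paper: the same lower bound from cliques and inherited mp-sets, and the same upper bound via the induced path $u\,w\,v$ forcing $M \cap V(G)$ and $M \cap V(H)$ to be cliques. The only difference is that you make explicit (via the structural claim about induced paths in a join) why an mp-set of $G$ remains one in $G \vee H$, a point the paper simply declares evident; your propagation argument for that claim is sound.
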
  
	\begin{proof}
		It is evident that an mp-set in $G$ is an mp-set in $G \vee H$, so that $\mono(G \vee H) \geq \mono(G)$ and likewise $\mono(G \vee H) \geq \mono(H)$. Also the union of a clique in $G$ and a clique in $H$ is a clique in $G \vee H$ and is hence an mp-set in $G \vee H$, so it follows that $\mono (G \vee H) \geq \max\{\omega (G) + \omega (H), \mono(G), \mono(H)\} $. For the opposite direction, let $M$ be a maximum mp-set in $G \vee H$ and set $M_1 = M \cap V(G)$ and $M_2 = M \cap V(H)$.  If $M_1 = \emptyset $ or $M_2 = \emptyset $, then trivially $\mono(G \vee H) \leq \max \{ \mono(G),\mono(H)\} $, so suppose that $M_1$ and $M_2$ are both non-empty. Suppose that there exist distinct $x_1,x_2 \in M_1$ such that $x_1 \not \sim x_2$; in this case if $y \in M_2$, then $x_1,y,x_2$ would be a monophonic path in $G \vee H$, a contradiction. Therefore $M_1$ and $M_2$ must both induce cliques in $G$ and $H$ respectively, so that $\mono(G \vee H) \leq \omega (G) + \omega (H)$. 
	\end{proof}

	\section{Complements of bipartite graphs and split graphs}\label{sec:split graphs}
	
	\emph{Cographs} are induced $P_4$-free graphs. As cographs are distance-hereditary, the mp- and gp-numbers of these graphs are equal by Observation~\ref{lem:dh}. It is therefore of interest to study the mp-numbers of induced $P_5$-free graphs; in this class equality between the $\gp $- and $\mono $-numbers does not hold in general, as shown by the cycle $C_5$, for which $\mono (C_5) = 2 < 3 = \gp (C_5)$. We therefore consider two well-known classes of induced $P_5$-free graphs, namely the complement of connected bipartite graphs and split graphs.  Since the complement of the complete bipartite graph $K_{m,n}$ is disconnected, clearly $\mono(\overline K_{m,n}) = m+n$.
	
	Let $G$ be a connected bipartite graph with bipartition $(A,B)$ and let $S \subseteq V(G)$. Fix $S_A = S\cap A$ and $S_B = S\cap B$. A set $X \subseteq A$ (or $B$) is \emph{uniform} in $G$ if $N(u) = N(v)$ for all $u, v \in X$; we call $S \subseteq V(G)$ a \emph{uniform set} if both $S_A$ and $S_B$ are uniform in $G$. Let $\psi(G)$ denote the number of vertices in a largest uniform set in $G$. 
	
	\begin{theorem}\label{theorem: Complement of bipartite graphs}
		If $G$ is a connected bipartite graph with bipartition $(A,B)$, then $\mono(\overline G) =  \max\{ \alpha(G),\psi(G) \}.$
	\end{theorem}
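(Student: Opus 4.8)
The plan is to exploit the very restricted structure of $\overline G$. Since $A$ and $B$ are independent sets of the bipartite graph $G$, each of $A$ and $B$ induces a clique in $\overline G$, so $\overline G$ is obtained from two cliques on $A$ and $B$ by adding the cross-edges $\{ab : a\in A,\, b\in B,\, ab\notin E(G)\}$. The crucial fact is that \emph{every induced path of $\overline G$ has at most four vertices}: on an induced path the ends of any subpath of length at least two are non-adjacent, so an induced path meets each of the cliques $A,B$ in at most two vertices; moreover a four-vertex induced path must meet $A$ in two \emph{consecutive} vertices and $B$ in the other two (type $AABB$, up to swapping $A$ and $B$), since two vertices of a clique lying at distance at least two along the path would be a chord. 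Hence the only shapes of a monophonic path carrying three prescribed vertices are the three-vertex paths (two vertices in one part, one in the other) and the four-vertex paths of type $AABB$; I will test the monophonic-position condition only against these.

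For the lower bound, a maximum independent set of $G$ is a clique of $\overline G$, and every clique is an mp-set (no induced path has three vertices in a clique), so $\mono(\overline G)\ge\alpha(G)$. Next let $U$ be a maximum uniform set with $U_A=U\cap A$, $U_B=U\cap B$, let $N^{*}\subseteq B$ be the common $G$-neighbourhood of $U_A$ and $N^{**}\subseteq A$ that of $U_B$. For $b\in U_B$ and $a\in U_A$ one has $a\sim b$ in $\overline G$ iff $b\notin N^{*}$, independently of $a$; so each vertex of $U_B$ is adjacent in $\overline G$ to all of $U_A$ or to none, and symmetrically for $U_A$. This forces $\overline G[U]$ to be a clique (hence an mp-set), or else the disjoint union of the cliques $U_A$ and $U_B$. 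In the latter case I will eliminate the path shapes above one by one: a three-vertex induced path through three vertices of $U$, or an $AABB$ induced path through three vertices of $U$, would force its ``middle'' vertex to satisfy both $x\in N^{*}$ and $x\notin N^{*}$ (or the analogous contradiction with $N^{**}$), which is impossible. Hence $\mono(\overline G)\ge|U|=\psi(G)$, giving $\mono(\overline G)\ge\max\{\alpha(G),\psi(G)\}$.

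For the upper bound, let $M$ be a maximum mp-set and set $M_A=M\cap A$, $M_B=M\cap B$. If $M_A=\emptyset$ or $M_B=\emptyset$, then $M$ lies in one part, hence is a clique of $\overline G$ and so an independent set of $G$, giving $|M|\le\alpha(G)$. Otherwise both are non-empty. If some $b\in M_B$ were adjacent in $\overline G$ to some but not all of $M_A$, say $u\sim b$ and $v\not\sim b$ with $u,v\in M_A$, then $v,u,b$ would be an induced path through three vertices of $M$; so every $b\in M_B$ is adjacent in $\overline G$ to all of $M_A$ or to none, and the same argument applied to $b_1,b_2\in M_B$ on the path $b_2,b_1,u$ shows all of $M_B$ is of one type. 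In the ``all'' type $M$ is a clique of $\overline G$, hence independent in $G$, so $|M|\le\alpha(G)$. In the ``none'' type — equivalently, $M_A$ is completely joined to $M_B$ in $G$ — I claim $M$ is uniform: if $M_A$ were not uniform, pick $u,v\in M_A$ and $w\in B$ with $vw\in E(G)$, $uw\notin E(G)$; the complete join forces $w\notin M$, and then for any $b\in M_B$ the vertices $b,w,u,v$ form an induced path of $\overline G$ (type $BBAA$) through the three vertices $b,u,v$ of $M$, a contradiction. So $M_A$, and symmetrically $M_B$, is uniform, whence $M$ is uniform and $|M|\le\psi(G)$. Combining the cases gives $\mono(\overline G)\le\max\{\alpha(G),\psi(G)\}$, and with the lower bound the theorem follows.

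The part I expect to be most delicate is the verification, in the two places where the relevant induced subgraph is a disjoint union of two cliques, that no induced path of $\overline G$ picks up three of the relevant vertices: one must enumerate how three points can sit on a three- or four-vertex induced path and, in each configuration, extract the contradiction from uniformity (for the lower bound) or from the complete join between $M_A$ and $M_B$ (for the upper bound). Everything else is routine translation between cliques and independent sets under complementation, together with a small amount of bookkeeping for empty or singleton $M_A,M_B$.
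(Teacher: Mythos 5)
Your proof is correct and follows essentially the same route as the paper's: the same case split on whether $M_A$, $M_B$ are empty and whether $\overline{G}$ has an edge between them, the same four-vertex induced path $b,w,u,v$ witnessing non-uniformity in the upper bound, and the same lower-bound constructions from a maximum independent set and a maximum uniform set. Your explicit observation that every induced path of $\overline{G}$ has at most four vertices, necessarily of type $AABB$, makes the verification that a maximum uniform set is in monophonic position more complete than the paper's (which asserts it after noting the all-or-nothing adjacency), but this is a presentational refinement rather than a different method.
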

	\begin{proof}
		Let $S$ be a maximum $\mono $-set in $\overline G$. Then both $S_A$ and $S_B$ are cliques in $\overline G$. If $S_A = \emptyset$ or $S_B = \emptyset $, then it is clear that $|S| \leq \omega (\overline G) =\alpha(G)$, so assume that $S_A \not = \emptyset $ and $S_B \not = \emptyset $. If there is an edge between $S_A$ and $S_B$ in $\overline G$, then each vertex of $S_A$ must be adjacent to all vertices of $S_B$, so that $S$ induces a clique in $\overline G$ and again $|S| \leq \alpha(G)$. Hence we can assume that there is no edge between $S_A$ and $S_B$ in $\overline G$; thus $S$ induces a complete bipartite subgraph in $G$. 
		
		Next we claim that $S$ is uniform. Assume to the contrary that there exist vertices $u,v \in S_A$ and $w \in B$ such that $w$ is adjacent to $v$ but not to $u$ in $G$. Then $w \notin S_B$. Choose $x \in S_B $ arbitrarily. Then the path $v,u,w,x$ is a $v,x$-monophonic path in $\overline G$ containing the vertex $u$, a contradiction to the fact that $S$ is an mp-set of $\overline G$. Thus $S$ must be uniform in $G$ and so $|S|\leq \psi(G)$. Therefore we have $\mono (\overline G) \leq \max \{ \alpha (G),\psi (G)\} $.
		
		On the other hand, it is clear that $\mono(\overline G) \geq \omega (\overline G) =\alpha(G)$. We now show that every maximum uniform set $S \subseteq V(G)$ gives an $\mono $-set in $\overline G$. Both $S_A$ and $S_B$ are cliques in $\overline G$. If there is no edge between $S_A$ and $S_B$ in $G$, then $S$ induces a clique in $\overline G$, so that $S$ is an $\mono $-set and $\mono(\overline G) \geq |S| = \psi(G)$. Hence we can assume that there is an edge between $S_A$ and $S_B$ in $G$; as $S$ is uniform, $G$ contains all edges between $S_A$ and $S_B$, so that there are no edges between $S_A$ and $S_B$ in $\overline G$. Also by uniformity, there are sets $X \subseteq A \setminus S_A$ and $Y \subseteq B \setminus S_B$ such that $N(x) = Y$ for each $x \in S_A$ and $N(y) = X$ for each $y \in S_B$; from this it is simple to see that $S$ is in monophonic position in $\overline G$, so that $\mono(\overline G) \geq |S|= \psi(G)$, concluding the proof.
	\end{proof}
	
	Let $T$ be a tree with order $n \geq 2$ and suppose that $S$ is a uniform set of $T$ with $|S| > \alpha (T)$. Suppose that $|S_A| \geq 2$; then each vertex in $S_A$ is a leaf, for if $d(u) \geq 2$ for some $u \in S_A$, then for any $v \in S_A \setminus \{ u\} $ the vertices $u$ and $v$ would have at least two common neighbours, which is impossible. As the same reasoning applies to $S_B$ and $\alpha (T) \geq \ell (T)$, we conclude that $S_A$ is a set of leaves and $|S_B| = 1$, say $S_B = \{ w\} $. If $S_A \cup S_B = V(T)$, then $T$ is a star, $V(T)$ is a uniform set and $\mono (T) = n(T)$; otherwise, considering the endpoint of a longest path to a vertex of $V(T) \setminus S$, we see that $T$ contains a leaf that does not belong to $S$, so that after all $|S| \leq \alpha (T)$. This proves the following corollary.
	\begin{corollary}
		If $T$ is a tree with order $n \geq 2$, then $$\mono(\overline T ) =
		{
			\begin{cases}
				n,&\text{ if } T \text{ is a star}, \\	
				
				\alpha(T), & \text{ otherwise.}
		\end{cases}}$$
	\end{corollary}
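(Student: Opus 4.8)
The plan is to derive the corollary directly from Theorem~\ref{theorem: Complement of bipartite graphs}. A tree $T$ is a connected bipartite graph, so that theorem gives $\mono(\overline T) = \max\{\alpha(T),\psi(T)\}$, and it remains only to compare $\psi(T)$ with $\alpha(T)$ according to the diameter of $T$. Since $T$ has order $n \geq 3$ we have $\diam(T)\geq 2$, so the two cases to treat are $\diam(T)=2$ and $\diam(T)\geq 3$.

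For the first case, a tree of order $n\geq 3$ with $\diam(T)\leq 2$ is the star $K_{1,n-1}$, whose complement is the disjoint union $K_{n-1}\cup K_1$. A disjoint union of cliques has no induced path on three vertices, so every subset of its vertices is in monophonic position, giving $\mono(\overline T)=n$. (Equivalently, $\alpha(K_{1,n-1})=n-1$ while the whole vertex set of $K_{1,n-1}$ is uniform, so $\psi(K_{1,n-1})=n$, and Theorem~\ref{theorem: Complement of bipartite graphs} again yields $n$.)

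The substance of the proof is the claim that $\psi(T)\leq\alpha(T)$ whenever $\diam(T)\geq 3$; this forces $\max\{\alpha(T),\psi(T)\}=\alpha(T)$ and completes the corollary. Let $S$ be a uniform set of $T$ with bipartition classes $(A,B)$, and write $S_A=S\cap A$, $S_B=S\cap B$. If $S_A=\emptyset$ or $S_B=\emptyset$, or if there is no edge of $T$ between $S_A$ and $S_B$, then $S$ is an independent set and $|S|\leq\alpha(T)$. Otherwise fix an edge $u_0w_0$ with $u_0\in S_A$, $w_0\in S_B$; by uniformity every vertex of $S_A$ is adjacent to $w_0$ and every vertex of $S_B$ is adjacent to $u_0$, and combining the two definitions once more shows that every vertex of $S_A$ is adjacent to every vertex of $S_B$. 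Since $T$ is acyclic it contains no $C_4$, so $\min\{|S_A|,|S_B|\}=1$; say $S_B=\{w_0\}$. If also $|S_A|=1$ then $|S|=2\leq\alpha(T)$, because a tree of order at least three has two non-adjacent vertices. If $|S_A|\geq 2$, then uniformity together with acyclicity (a second common neighbour of $S_A$ would create a $C_4$) forces the common neighbourhood of $S_A$ to be exactly $\{w_0\}$, so every vertex of $S_A$ is a leaf pendant at $w_0$. As $\diam(T)\geq 3$, $T$ is not a star, so any $x\in S_A$ has eccentricity at least $3$; choosing $z$ with $d_T(x,z)=3$ yields a vertex $z\neq w_0$ non-adjacent to every vertex of $S_A$, whence $S_A\cup\{z\}$ is independent and $|S|=|S_A|+1\leq\alpha(T)$.

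I expect the only delicate point to be this last case, where $S_A$ has at least two vertices and meets the other side: one must rule out a further common neighbour of $S_A$ by another cycle-freeness argument and then exploit $\diam(T)\geq 3$ to locate the replacement vertex $z$. Everything else is bookkeeping once Theorem~\ref{theorem: Complement of bipartite graphs} is in hand.
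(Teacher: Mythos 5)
Your proof is correct, and it is worth comparing with the paper's own justification, which is considerably terser: the paper asserts that in any tree of diameter at least three one has $N(u)\neq N(v)$ for every pair of non-adjacent vertices, and concludes $\psi(T)=2\leq\alpha(T)$. That assertion is actually false in general: two leaves pendant at a common vertex are non-adjacent and have identical neighbourhoods, and for a double star with $k+m\geq 3$ leaves the set of all leaves is uniform, so $\psi(T)=k+m>2$. The corollary nevertheless survives because the weaker inequality $\psi(T)\leq\alpha(T)$ holds whenever $\diam(T)\geq 3$, and that is exactly what you prove. Your case analysis is sound: when there is no $T$-edge between $S_A$ and $S_B$ the uniform set is independent; when there is one, uniformity forces a complete bipartite subgraph, acyclicity forces one side to be a single vertex $w_0$ and (if $|S_A|\geq 2$) forces $S_A$ to consist of leaves at $w_0$, and the eccentricity bound $\mathrm{ecc}(x)=1+\mathrm{ecc}(w_0)\geq 3$ supplies the replacement vertex $z$ making $S_A\cup\{z\}$ independent. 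The handling of the star case via the disconnected complement also matches the remark preceding Theorem~\ref{theorem: Complement of bipartite graphs}. In short, you take the same route the paper intends (reduce to that theorem and compare $\psi$ with $\alpha$), but your version of the key step is the one that actually works; the paper's stated reason does not.
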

	
	A similar argument yields the following results.
	\begin{corollary}
		If $n,m \geq 2,$ then $$\mono(\overline {P_n \Box P_m}) =
		{
			\begin{cases}
				4,&\text{ if } n = m = 2, \\	
				
				\alpha( {P_n \Box P_m}) = \lceil \frac{n}{2}\rceil \lceil \frac{m}{2}\rceil + \lfloor \frac{n}{2}\rfloor\lfloor \frac{m}{2}\rfloor, & \text{ otherwise.}
		\end{cases}}$$
		
	\end{corollary}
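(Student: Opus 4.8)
The plan is to deduce the corollary from Theorem~\ref{theorem: Complement of bipartite graphs} applied to $G = P_n \Box P_m$, which is connected and bipartite; the two cases in the statement will correspond exactly to whether $\psi(G)$ exceeds $\alpha(G)$.

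\emph{The independence number.} First I would fix the bipartition $(A,B)$ of $G$ in which $A$ consists of the vertices $(i,j)$ with $i+j$ even. A parity count in each coordinate gives $|A| = \lceil n/2 \rceil \lceil m/2 \rceil + \lfloor n/2 \rfloor \lfloor m/2 \rfloor$, and since $(\lceil n/2\rceil - \lfloor n/2\rfloor)(\lceil m/2\rceil - \lfloor m/2\rfloor) \geq 0$ one checks that $|A| \geq |B| = nm - |A|$. As a colour class is independent, $\alpha(G) \geq |A|$; for the reverse inequality I would use that $G$ has a matching saturating the smaller colour class — a domino tiling when $nm$ is even, and a domino tiling of $G$ with one (majority-coloured) corner removed when $nm$ is odd — so that $\alpha(G) = |A|$ by König's theorem. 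A short identity check confirms $\lceil nm/2 \rceil = \lceil n/2 \rceil \lceil m/2 \rceil + \lfloor n/2 \rfloor \lfloor m/2 \rfloor$, giving the stated value of $\alpha(G)$.

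\emph{The parameter $\psi(G)$.} The crux is to decide when two distinct vertices of $G$ in the same colour class have the same neighbourhood. Suppose $u \neq v$ lie in one class with $N_G(u) = N_G(v) = N$. Every vertex of $G$ has degree at least $2$, so $|N| \geq 2$; conversely two distinct vertices of a grid have at most two common neighbours, and when they have exactly two these four vertices form the $4$-cycle bounding a unit square. Hence $|N| = 2$ and $u,v$ are the two same-colour corners of a unit square, so $\deg_G(u) = \deg_G(v) = 2$; thus $u$ and $v$ are both corner vertices of the grid, and a quick check shows two grid corners can be diagonal corners of a common unit square only if $n = m = 2$. Consequently, when $(n,m) \neq (2,2)$, distinct same-colour vertices always have distinct neighbourhoods, so in any uniform set $S$ each of $S_A$ and $S_B$ contains at most one vertex, whence $\psi(G) = 2$ (the bound $\psi(G) \geq 2$ is witnessed by the two endpoints of any edge). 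When $n = m = 2$ we have $G = C_4$, whose whole vertex set is uniform — each colour class is a pair of vertices with the same neighbourhood — so $\psi(C_4) = 4$.

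\emph{Conclusion.} By Theorem~\ref{theorem: Complement of bipartite graphs}, $\mono(\overline G) = \max\{\alpha(G), \psi(G)\}$. For $n = m = 2$ this is $\max\{2,4\} = 4$. Otherwise $nm \geq 6$, so $\alpha(G) \geq 3 > 2 = \psi(G)$ and $\mono(\overline G) = \alpha(G)$, which is the displayed formula. The main obstacle is the analysis of $\psi(G)$ — in particular the geometric fact that distinct same-colour vertices of a grid larger than $C_4$ never share a neighbourhood — with a secondary point being the rigorous justification of the value of $\alpha(P_n \Box P_m)$ when $nm$ is odd.
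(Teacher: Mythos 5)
Your proof is correct and follows exactly the route the paper intends: the corollary is meant to come from Theorem~\ref{theorem: Complement of bipartite graphs} via ``a similar argument'' to the tree case, namely showing $\psi(P_n \Box P_m)=2$ unless the grid is $C_4$, and your common-neighbourhood analysis (two same-colour vertices share a neighbourhood only as diagonal corners of a unit square, forcing degree $2$ and hence $n=m=2$) together with the standard computation of $\alpha$ supplies precisely the details the paper omits. One minor remark: the stated formula forces $P_n$ to denote the path on $n$ vertices, contrary to the paper's own convention that $P_\ell$ has order $\ell+1$, and you have (correctly) adopted the reading that makes the statement true.
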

	\begin{corollary}
		If $k\geq 3$, then $\mono(\overline{Q_k}) = 2^{k-1}$, where $Q_k$ is the hypercube with order $2^k$.  
	\end{corollary}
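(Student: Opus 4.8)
The plan is to apply Theorem~\ref{theorem: Complement of bipartite graphs} directly. The hypercube $Q_k$ is connected and bipartite, with bipartition $(A,B)$ where $A$ and $B$ are the binary strings of even and odd weight respectively, each of size $2^{k-1}$. Hence $\mono(\overline{Q_k}) = \max\{\alpha(Q_k),\psi(Q_k)\}$, and it remains to evaluate these two parameters for $k \geq 3$.

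First I would check that $\alpha(Q_k) = 2^{k-1}$. The lower bound is immediate, since each part of the bipartition is an independent set of size $2^{k-1}$. For the upper bound, note that $Q_k$ has a perfect matching (match each vertex to the vertex obtained by flipping its first coordinate), so any independent set omits at least one endpoint of each of the $2^{k-1}$ matching edges, giving $\alpha(Q_k)\le 2^k - 2^{k-1} = 2^{k-1}$.

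The main step is to show $\psi(Q_k) = 2$ when $k\ge 3$. Suppose $u$ and $v$ are distinct vertices of $Q_k$ lying in the same part of the bipartition; then they differ in an even number $2j\ge 2$ of coordinates. A short case check on common neighbours shows that $|N_{Q_k}(u)\cap N_{Q_k}(v)| = 2$ when $2j = 2$ and $|N_{Q_k}(u)\cap N_{Q_k}(v)| = 0$ when $2j \ge 4$; in either case $|N_{Q_k}(u)\cap N_{Q_k}(v)| \le 2 < k = |N_{Q_k}(u)|$, so $N_{Q_k}(u)\neq N_{Q_k}(v)$. Consequently a uniform set of $Q_k$ contains at most one vertex of $A$ and at most one vertex of $B$, whence $\psi(Q_k)\le 2$; the reverse inequality is trivial. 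Combining the two computations, $\mono(\overline{Q_k}) = \max\{2^{k-1},2\} = 2^{k-1}$ for every $k\ge 3$, as claimed.

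The only genuine obstacle is the neighbourhood count in the hypercube that underlies the evaluation of $\psi$; everything else is a routine application of Theorem~\ref{theorem: Complement of bipartite graphs}. It is worth remarking on why the hypothesis $k\ge 3$ is needed: for $k=2$ the antipodal vertices $00$ and $11$ of $Q_2\cong C_4$ satisfy $N(00)=N(11)=\{01,10\}$, so $\psi(Q_2)=4 > 2^{2-1}$ and the formula would fail.
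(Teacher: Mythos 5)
Your proof is correct and follows exactly the route the paper intends: the paper derives this corollary from Theorem~\ref{theorem: Complement of bipartite graphs} by "a similar argument" to the tree case, namely showing $\psi(Q_k)=2\le\alpha(Q_k)=2^{k-1}$, and your common-neighbour count (two common neighbours at Hamming distance $2$, none at distance $\ge 4$, both less than the degree $k\ge 3$) correctly supplies the detail that distinct vertices in the same part have distinct neighbourhoods. Your closing remark explaining why $k=2$ fails is a nice sanity check consistent with the paper's observation that $\overline{K_{m,n}}$-type behaviour ($\psi$ dominating) can occur.
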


	A graph is a \emph{split graph} if the vertex set can be partitioned into a clique $C$ and an independent set $I$.  If there is a vertex $v$ that is adjacent to every vertex of $C\setminus \{ v\} $ and non-adjacent with every vertex of $I\setminus \{ v\} $ we will say that $v$ is \emph{divided}. Obviously there cannot be divided vertices in both $C$ and $I$. If there are no divided vertices, then $\omega (G) = |C|$ and $\alpha (G) = |I|$. If $G$ contains a divided vertex $v$, then by moving $v$ from $C$ into $I$ if necessary, we can assume that any divided vertices lie in $I$ and $\omega (G) = |C|+1$ and $\alpha (G) = |I|$. We define a \emph{separated subgraph} $(C',I')$ of the split graph $G= (C,I)$ as follows.
	
	\begin{definition}
		Let $C' \subseteq C$ and $I' \subseteq I$. Then $G' = (C',I')$ is a \emph{separated subgraph} of the split graph $G = (C,I)$ if and only if either
		\begin{itemize}
			\item $($Type A$)$ there is no edge from $C'$ to $I'$ in $G$, or
			\item $($Type B$)$ there is a vertex $v \in I'$ that is adjacent to every vertex of $C'$, there are no edges from $I' \setminus \{ v\} $ to $C'$ and for any $w \in I' \setminus \{v\} $ we have $N(w) \subset N(v)$.
		\end{itemize}
		Let $\phi (G)$ denote the order of a largest separated subgraph of $G$. 
	\end{definition}
	Trivially the clique induced by $C$ is separated, as is the independent set induced by $I$. If there is a divided vertex $v$ in $I$, then $(C,\{ v\})$ is separated. This shows that we always have $\phi (G) \geq \max \{ \omega (G),\alpha (G)\} $.

	\begin{theorem}
		Let $G = (C,I)$ be a connected split graph. Then $\mono(G) = \phi (G)$.
	\end{theorem}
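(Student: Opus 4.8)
The plan rests on the fact that a split graph has no induced $P_5$: if $v_1v_2v_3v_4v_5$ were an induced path, then $v_1,v_3,v_5$ would be pairwise non-adjacent and hence at least two of them would lie in $I$, but each of those has a neighbour among $v_2,v_4$, which would therefore lie in $C$, contradicting $v_2\not\sim v_4$. Thus every monophonic path of $G$ has at most four vertices, and this short-path property is what drives both inequalities.

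For $\mono(G)\ge\phi(G)$ I would show that a largest separated subgraph $(C',I')$ is an mp-set. If $C'\cup I'$ induces a clique there is nothing to prove. Otherwise there is no edge between $C'$ and $I'$; suppose three of its vertices lie on a monophonic path $P$, which has at most four vertices. Three of them in $C'$ is impossible (a path contains no triangle) and three in $I'$ is impossible (that would require an induced $P_5$). In the remaining configurations — two in $C'$ and one in $I'$, or one in $C'$ and two in $I'$ — one uses that on a path with at most four vertices the two adjacent $C'$-vertices must be consecutive while the $I'$-vertices cannot be consecutive to them, and then checks that the unique remaining internal vertex of $P$ cannot be placed in $C$ (it would form a chord with a $C$-vertex, since $C$ is a clique) nor in $I$ (it would be adjacent to another $I$-vertex on $P$). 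Hence $C'\cup I'$ is an mp-set, giving $\mono(G)\ge\phi(G)$.

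For $\mono(G)\le\phi(G)$, let $M$ be a maximum mp-set and put $M_C=M\cap C$, $M_I=M\cap I$. Since every split graph admits a split partition whose clique part is a maximum clique of the graph, I would first re-choose the partition so that $C$ is a maximum clique; equivalently, no vertex of $I$ is adjacent to every vertex of $C$. If $M_C=\emptyset$ or $M_I=\emptyset$ then $M$ is itself a separated subgraph. If both are non-empty but there is no edge of $G$ between them, then $(M_C,M_I)$ is a separated subgraph of order $|M|$. So assume $M_C,M_I\ne\emptyset$ and some $x_0\in M_C$ is adjacent to some $z\in M_I$. From the mp-property together with the absence of an induced $P_5$ one extracts the following structure: (i) every vertex of $M_I$ is adjacent to all or to none of $M_C$ (otherwise a vertex of $M_I$ and two vertices of $M_C$, one adjacent and one not, lie on an induced $P_3$); (ii) every vertex of $M_C$ is adjacent to at most one vertex of $M_I$ (otherwise two vertices of $M_I$ and one of $M_C$ lie on an induced $P_3$). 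Combining these, $z$ is adjacent to all of $M_C$, $z$ is the only vertex of $M_I$ with a neighbour in $M_C$, and, writing $Z=M_I\setminus\{z\}$, no vertex of $Z$ has a neighbour in $M_C$; a further $P_4$ argument then gives $N_G(y)\cap C\subseteq N_G(z)\cap C$ for every $y\in Z$.

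Now $M_C\cup\{z\}$ is a clique, so by maximality of $C$ the vertex $z$ is not adjacent to every vertex of $C$; pick $v\in C$ with $v\not\sim z$. Then $v\notin M_C$, and by the containment above $v$ has no neighbour in $Z$; since $M_C$ also has no neighbour in $Z$, the pair $(M_C\cup\{v\},\,Z)$ is a separated subgraph, of order $|M_C|+1+(|M_I|-1)=|M|$, so $\phi(G)\ge|M|$. Combined with the first inequality this yields $\mono(G)=\phi(G)$. The genuinely delicate point — and the reason for re-choosing $C$ to be a maximum clique at the outset — is precisely this last step: were $z$ adjacent to all of $C$, the best separated subgraph one can manufacture has order $|C|+1$, which need not be at least $|M|$, so that configuration must be excluded before the argument begins.
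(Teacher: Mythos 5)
Your proof is correct, but in the upper-bound direction it takes a genuinely different route from the paper's, and the difference turns out to matter. The lower bound $\mono(G)\ge\phi(G)$ is the same in both arguments (the paper merely asserts that a separated subgraph is ``easily verified'' to be an mp-set; your $P_5$-free case analysis is that verification, and it is sound). For the upper bound the paper argues that for a maximum mp-set $M$ the pair $(M\cap C,M\cap I)$ is \emph{itself} separated: after showing, as you do, that a vertex $v\in M\cap I$ with a neighbour in $M\cap C$ must be adjacent to all of $M\cap C$, it claims that any further vertex $v'\in M\cap I$ would force a monophonic $v,v'$-path through $M\cap C$, whence $|M\cap I|=1$. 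That claim fails: take the triangle $c_1c_2z$ with two pendant vertices $y_1,y_2$ attached to $c_2$ and the split partition $C=\{c_1,c_2\}$, $I=\{z,y_1,y_2\}$ (this is the paper's own normalisation, since $z$ is divided and lies in $I$). Every induced path of this graph on at least three vertices is a $P_3$ centred at $c_2$, so $M=\{c_1,z,y_1,y_2\}$ is a maximum mp-set with $\mono(G)=4$, yet $(M\cap C,M\cap I)$ is not separated and in fact $\phi=3$ for this partition; the statement is therefore sensitive to the choice of split partition. Your device --- renormalising so that $C$ is a maximum clique and then, when some $z\in M\cap I$ sees all of $M\cap C$, trading $z$ for a vertex $v\in C\setminus N(z)$ to build a \emph{different} separated subgraph of the same order --- is exactly what is needed to repair this; your intermediate claims (i), (ii) and the $P_4$ argument giving $N(y)\cap C\subseteq N(z)\cap C$ for $y\in M\cap I$ with no neighbour in $M\cap C$ all check out. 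The one point you should make explicit is that $\phi$ as defined depends on the partition (the example above has $\phi=3$ for one partition and $\phi=4$ for the maximum-clique one), so what you actually prove is the equality $\mono(G)=\phi(G)$ for the maximum-clique partition, together with the fact (via the lower bound, which holds for every partition) that this partition maximises $\phi$; that is the form in which the theorem is true.
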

	\begin{proof}
		Let $(C',I')$ be a separated subgraph of $G$. Suppose for a contradiction that $P$ is a monophonic path with endpoints $w_1,w_3 \in C' \cup I'$ that passes through a vertex $w_2 \in (C' \cup I') \setminus \{w_1,w_3\} $. We cannot have $w_1,w_3 \in C'$, as $w_1 \sim w_3$. Also $w_2$ cannot lie in $I'$, as the neighbours of $w_2$ on $P$ would both be in $C$ and so would be adjacent. Suppose that $w_1,w_3 \in I',w_2 \in C'$. If $(C',I')$ is a separated subgraph of Type A, then the monophonic path $P$ would have to include at least three vertices from $C$, which is impossible. Hence $(C',I')$ is a separated subgraph of Type B. Now, if $w_1 = v$, then $P$ cannot be monophonic, as we would have $N(w_3) \subset N(w_1)$, but $w_3 \not \sim w_2$. Hence $w_1,w_2 \in I' \setminus \{ v\}$; however, in this case $P$ would contain at least three vertices from $C$, which is again impossible. Finally suppose that $w_1,w_2 \in C',w_3 \in I'$. Since $P$ has exactly two vertices from $C$, it follows that $(C',I')$ is Type $B$ and $w_3 = v$; this is a contradiction, as we would have $v \sim w_1$. Hence we conclude that any separated subgraph is in monophonic position.
		
		Conversely, we now show that any subgraph $(C',I')$ that is in monophonic position must be separated. If $(C',I')$ is not Type A, then there is a vertex $v \in I'$ with an edge to a vertex $u \in C'$. Suppose that there is a vertex $u' \in C' \setminus N(v)$; then $v,u,u'$ would be a monophonic path in $(C',I')$, a contradiction. Hence $v$ is adjacent to every vertex of $C'$. If there is a vertex $v' \in I' \setminus \{ v\}$ that is adjacent to a vertex $x$ in $C'$, then $v,x,v'$ would be an induced path in $(C',I')$, so there are no edges from $C'$ to $I' \setminus \{ v\} $. Finally suppose that there is an edge $u' \sim v'$, where $u' \in C \setminus C'$, $v' \in I' \setminus \{ v\}$, such that $v \not \sim u'$; in this case $v,u,u',v'$ would be an induced path containing three vertices of $(C',I')$; we conclude that $N(v') \subset N(v)$ for any $v' \in I' \setminus \{ v\} $. Hence $(C',I')$ is a separated subgraph. 
	\end{proof}
	
	We noted before that for any split graph $G$ we have $\mono(G) = \phi(G) \geq \max \{ \omega (G),\alpha (G)\} $. We now characterise the cases in which equality holds. For $X \subseteq V(C)$ write $N_I(X) = I \cap (\bigcup _{x \in X}N(x))$. By a matching between $C$ and $I$ in $G$, we will mean a matching such that no edge of the matching has both endpoints in $C$. 
	\begin{theorem}\label{lower bound split graph} 
		If $G =(C,I)$ is a connected split graph, then if there are no divided vertices in $G$ we have $\mono(G) = \phi (G) =  \max\{\omega(G), \alpha(G)\} $ if and only if there exists a matching in $G$ between $C$ and $I$ that saturates either $C$ or $I$. The same conclusion holds if $G$ has a divided vertex, unless $G$ is formed from a clique $A$ of order $r+2$ and an independent set $B$ of order $> r$ by adding a set $E'$ of edges between $A'$ and $I$, where $A' \subset A$, $|A'| = r$, such that $E'$ contains a matching saturating $A'$, in which case $\mono (G) = \alpha (G)+1 > \omega (G)$.   
	\end{theorem}
	\begin{proof}
		First suppose that there is no matching between $C$ and $I$ that saturates $C$ or $I$; we will show that $\phi (G) > \max \{ \omega (G), \alpha (G)\} $. By Hall's Theorem~\cite{Hall}, as there is no matching between $C$ and $I$ that saturates $C$, there is a subset $K \subseteq C$ such that $|N_I(K)| <|K|$. Then $(K,I \setminus N_I(K))$ is a separated subgraph with order $> |I| = \alpha (G)$. Similarly, considering matchings from $I$, we see that there is subset $J \subseteq I$ such that $|N(J)| < J$, implying that $(C \setminus N(J),J)$ is a separated subgraph with order $|(C \setminus N(J),J)| > |C|$. If $G$ has no divided vertices, then $|C| = \omega (G)$ and we are done, so suppose that $v$ is a divided vertex in $I$. If $v \in J$, then $N(J) = C$, so we have $|C| = |N(J)| < |J| \leq |I| \leq \alpha (G)$ and hence $\omega (G) = |C|+1 \leq \alpha (G)$. However, the separated set $(K,I \setminus N_I(K))$ constructed in the previous discussion has order $> \alpha (G)$ and hence suffices. Otherwise, we can move $v$ from $I \setminus J$ to $J$ to obtain a separated subset of Type B with order $\geq |C|+2 > \omega (G)$. 
		
		Now suppose that there is a matching $M$ between $C$ and $I$ that saturates $C$ or $I$. If $C' \cup I'$ induces a clique or an independent set, then the result follows immediately, so we assume that both $C'$ and $I'$ are non-empty. If $(C',I')$ is Type A, then either the vertices of $I'$ are matched to a subset of $C\setminus C'$ by $M$, in which case $|(C',I')| \leq |C| \leq \omega (G)$, or else the vertices of $C'$ are matched with a subset of $I \setminus I'$, in which case $|(C',I')| \leq \alpha (G)$. 
		
		Thus assume that $(C',I')$ is Type B. Firstly consider the case that $M$ saturates $I$. Then by the previous argument $M$ must contain an edge from $v \in I'$ to $C'$ and the remaining vertices of $I \setminus \{ v\} $ are matched with a subset of $C\setminus C'$. If $C \setminus (C' \cup N(I' \setminus \{ v\} )) \not = \emptyset $, then $|C' \cup I'| \leq |C| \leq \omega (G)$, so we can assume that $N(I' \setminus \{v\}) = C \setminus C'$. By definition of a separated set, $v$ must then be adjacent to every vertex of $C$, so that $v$ is a divided vertex and $|C' \cup I'| \leq |C|+1 = \omega (G)$. 
		
		Now suppose that $M$ saturates $C$; similarly to the previous case, we can assume that $M$ includes an edge from a vertex $u \in C'$ to the unique vertex $v \in I'$ that is adjacent to every vertex of $C$, and that $M$ contains a perfect matching between $C' \setminus \{ u\} $ and $I \setminus I'$, from which it follows that $\mono (G) \leq \alpha (G)+1$. But now it follows that each vertex of $C \setminus C'$ is matched by $M$ to a vertex of $I' \setminus \{ v\} $, so that $C \setminus C' = N(I' \setminus \{ v\} )$ and, by definition of separated set, $v$ is adjacent to every vertex of $C \setminus C'$ and $v$ is divided. Hence if there are no divided vertices we have equality in the lower bound. 
		
		Let $H$ be any bipartite graph with bipartition $(A,B)$ such that a) $|B| \geq |A|+1$ and b) there is a matching in $H$ that saturates $A$. Form a split graph $H' = (C,I)$ as follows: add two new vertices $x,y$ and add every edge between vertices in $\{ x,y\} \cup A$, then set $C = A \cup \{ x\} , I = B \cup \{ y\} $. The original matching from $A$ to $B$ in $H$ plus the edge $x \sim y$ gives the required matching. Also the set $B \cup \{ x,y\} $ is a largest mp-set, so that $\mono (H') = \alpha (H')+1 > \omega (H')$; in fact the previous discussion shows that a split graph $G$ has $\mono (G) = \alpha (G)+1 > \omega (G)$ if and only if it belongs to this family.
	\end{proof}

	\section{Realisation results}\label{sec:realisation}

	In this paper we have presented some properties of the monophonic position number of graphs.  In some respects this graph parameter behaves like the more widely studied general position number. It is therefore of interest to ask whether there is a relation between the two numbers other than the trivial inequality $\mono(G) \leq \gp(G)$? We will show that these two parameters are independent by proving the following realisation result: for any pair $a,b \in \mathbb{N}$ such that $2 \leq a \leq b$ there exists a graph $G$ with $\mono(G) = a$ and $\gp(G) = b$.  Firstly observe that if $a = b$, then trivially the complete graph $K_a$ has the required properties, so in the following we will assume that $a < b$.
	
	\begin{theorem}\label{half wheel}
		For any $2 \leq a \leq b$ there exists a graph $G$ with $\mono(G) = a$ and $\gp(G) = b$.
	\end{theorem}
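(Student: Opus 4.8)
My plan is to exhibit a concrete ``half wheel''. For $b \ge 2$ let $G = G_b$ be the graph with vertex set $\{c, v_1, v_2, \dots, v_{2b-1}\}$ whose edges are the path edges $v_iv_{i+1}$ ($1 \le i \le 2b-2$) together with the spokes $cv_{2j-1}$ ($1 \le j \le b$); thus $c$ is adjacent to exactly the odd-indexed vertices of the path $P = v_1v_2\cdots v_{2b-1}$, and for $b = 2$ this is just $C_4$. The graph $G$ is connected, has order $2b$ and is triangle-free (no two odd-indexed vertices are adjacent). Since $G$ has at least two vertices, $\mono(G) \ge 2$ automatically, so the work is to prove $\mono(G) \le 2$ and $\gp(G) = b$. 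For $\gp(G) \ge b$ I would check that $A = \{v_1, v_3, \dots, v_{2b-1}\}$ is a general position set: any two vertices of $A$ are at distance two, and every geodesic joining them runs either through $c$ or through the unique even-indexed vertex lying between two consecutive odd ones, so no geodesic meets three vertices of $A$.

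For $\gp(G) \le b$ I would first record the distances of $G$; the key point is that any four consecutive vertices $v_k, v_{k+1}, v_{k+2}, v_{k+3}$ of $P$ form a geodesic (a detour through $c$ never shortens their mutual distances), so a general position set contains at most two of every such quadruple. A short counting argument on a path of $2b-1$ vertices then shows that any gp-set $S$ with $c \notin S$ has $|S| \le b$. If $c \in S$, then $S$ contains at most one odd-indexed vertex (two odd vertices together with $c$ lie on a geodesic through $c$), so $S$ consists of $c$, at most one odd vertex and some even vertices; the extremal configuration is excluded using the fact that three even-indexed vertices $v_{2m}, v_{2m+2}, v_{2m+4}$ lie on a common geodesic, together with a direct check for $b \in \{2,3\}$. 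This yields $\gp(G) = b$.

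For $\mono(G) = 2$ I would show that every triple of vertices of $G$ lies on a common induced path, equivalently that $G$ has no monophonic position set of size three. A triple avoiding $c$ lies on $P$, and $P$ is an induced path of $G$ since its only possible chords are spokes at $c \notin V(P)$. For a triple $\{c, v_i, v_j\}$ with $i < j$ I would split into cases by the parities of $i, j$ and the gap $j-i$, and in each case write down an explicit chordless path through the three vertices --- for example $v_i - c - v_j$ when $i, j$ are both odd, $v_i - c - v_{j-1} - v_j$ when $i$ is odd, $j$ is even and $j \ge i+3$, $v_i - v_{i+1} - c - v_{j-1} - v_j$ when $i, j$ are both even and $j \ge i+4$, and short detours such as $v_i - v_{i-1} - c - v_{i+3} - v_{i+2}$ or $v_i - v_{i+1} - c$ in the degenerate cases where the naive path would repeat a vertex or acquire a chord. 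Because the neighbourhood of $c$ is exactly the set of odd vertices, checking chordlessness of these paths is immediate. The main obstacle will be the bound $\gp(G) \le b$: one must control every geodesic, including those that leave $P$ through $c$, and combine this with the counting argument while handling the small values of $b$; by contrast the $\mono(G)=2$ verification, though it has several sub-cases, is routine once the right detour paths are chosen.
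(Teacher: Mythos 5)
Your construction is correct, and while the overall strategy is the same as the paper's --- a hub‑and‑spoke gadget whose hub wrecks monophonic position but leaves a size-$b$ general position set on the rim --- the gadget itself is genuinely different. The paper uses the half-wheel $H_r$ (a chordless cycle $C_{2r}$ with a centre joined to the alternate vertices), handling $b=3$ separately via $C_5$; you use a fan over a path, with $c$ joined to the odd-indexed vertices of a path $P$ on $2b-1$ vertices. Your version buys a cleaner monophonic argument: since $P$ is itself an induced path containing every non-hub vertex, any triple avoiding $c$ is disposed of at once, and the triples through $c$ are killed by your explicit chordless detours --- including the one genuinely delicate case, two even vertices at gap two, which you correctly route as $v_i - v_{i-1} - c - v_{i+3} - v_{i+2}$ to dodge the chord $v_{i+1}v_{i+2}$. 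The price is paid in $\gp(G)\le b$: your window count (at most two of any four consecutive path vertices, since $d(v_k,v_{k+3})=3$) does give exactly $b$ when $c\notin S$, by splitting the $2b-1$ path vertices into blocks of four plus a remainder of one or three; and when $c\in S$ the bound $2+\lceil 2(b-1)/3\rceil\le b$ coming from ``at most one odd vertex'' and ``no three consecutive even vertices'' holds only for $b\ge 4$, so the direct checks you flag for $b\in\{2,3\}$ are genuinely needed --- mirroring the paper's own restriction of $H_r$ to $r\ge 4$. Net effect: comparable casework, but your family is uniform over all $b\ge 2$ (with $b=2$ degenerating to $C_4$), which is a mild aesthetic gain over the paper's two-construction split.
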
  
	\begin{proof}
		We begin with the case $a = 2$. We have $\mono(C_5) = 2, \gp(C_5) = 3$, so we can assume that $b \geq 4$. We define the \emph{half-wheel} graph $H_r$ for $r \geq 2$ as follows. Take a cycle $C_{2r}$ of length $2r$ and label the vertices by the elements of $\mathbb{Z}_{2r}$ in the natural manner. Add an extra vertex $x$ and join $x$ to all vertices in $C_{2r}$ with even labels. An example is shown in Figure~\ref{fig:H4}.

		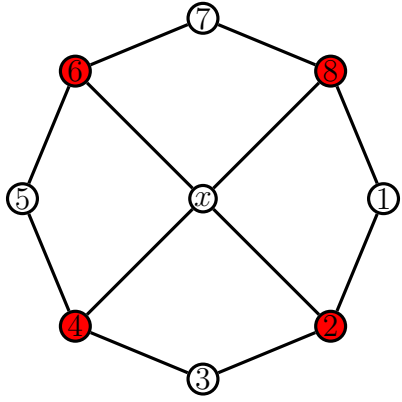
\begin{figure}
			\centering
			\begin{tikzpicture}[x=0.4mm,y=-0.4mm,inner sep=0.2mm,scale=0.6,very thick,vertex/.style={circle,draw,minimum size=10,fill=white}]
				\node at (100,0) [vertex] (v1) {$1$};
				\node at (70.71,70.71) [vertex,fill=red] (v2) {$2$};
				\node at (0,100) [vertex] (v3) {$3$};
				\node at (-70.71,70.71) [vertex,fill=red] (v4) {$4$};
				\node at (-100,0) [vertex] (v5) {$5$};
				\node at (-70.71,-70.71) [vertex,fill=red] (v6) {$6$};
				\node at (0,-100) [vertex] (v7) {$7$};
				\node at (70.71,-70.71) [vertex,fill=red] (v8) {$8$};
				\node at (0,0) [vertex] (x) {$x$};			
				
				\path
				(v1) edge (v2)
				(v2) edge (v3)		
				(v3) edge (v4)		
				(v4) edge (v5)
				(v5) edge (v6)
				(v6) edge (v7)
				(v7) edge (v8)
				(v8) edge (v1)
				(x) edge (v2)
				(x) edge (v4)
				(x) edge (v6)
				(x) edge (v8)

				;
			\end{tikzpicture}
			\caption{$H_4$ with a maximum gp-set}
			\label{fig:H4}
		\end{figure}
		
		We will show that for $r \geq 4$ the half-wheel has $\mono(H_r) = 2$ and $\gp(G) = r$. First let $M$ be an mp-set in $H_r$.  $M$ can contain at most two vertices of the induced cycle $C_{2r}$, so $\mono (H_r) \leq 3$ and equality holds only if $M$ contains $x$ and two vertices $i,j \in V(C_{2r})$. If $i$ and $j$ are both even, then $i, x, j$ is a monophonic path passing through three vertices of $M$, so we can take $j$ to be odd. Suppose that $i$ is even and $j$ odd; if $i \sim j$, then $j,i,x$ is a monophonic path containing three vertices of $M$, whereas if $i \not \sim j$, then $i,x,j-1,j$ is the required path. Finally if $i$ and $j$ are both odd, then $i,i-1,x,j+1,j$ is the required induced path, where we assume that $j = i+2$ if $d(i,j) = 2$. Thus $\mono(H_r) = 2$.
		
		The set of all even integers on $C_{2r}$ is a general position set, so $\gp(H_r) \geq r$. Suppose that there is a general position set $K$ in $H_r$ with $r+1$ elements.  Suppose that $x \not \in K$. Then $K$ contains two vertices that are neighbours on $C_{2r}$, without loss of generality $i,i+1 \in K$, where $i$ is even.  Then the distance from $i+1$ to any even vertex $j$ of $C_{2r}$ apart from $i$ and $i+2$ is three and $i+1, i, x, j$ is a geodesic. Also $i+2 \not \in K$, as $i, i+1, i+2$ is a geodesic. Therefore $K$ must consist of the set of odd vertices of $C_{2r}$ together with $\{ i\} $. However $i-1,i,i+1$ would then be a geodesic containing three vertices of $K$.
		
		Hence we can assume that $x \in K$. Suppose that $K$ contains an even vertex $i$ of $C_{2r}$.  Between any two even vertices of $C_{2r}$ there is a geodesic passing through $x$, so $K$ can contain no other even vertex of $C_{2r}$. Also $K$ cannot contain $i+1$ or $i-1$, as $i$ is contained in $(i-1),x$- and $(i+1),x$-geodesics.  Therefore $K$ would contain at most $2+(r-2) = r$ vertices, a contradiction. Hence $K$ consists of $x$ and the odd vertices of $C_{2r}$; however, $1, 2, x, 4, 5$ is a geodesic containing three vertices of $K$. It follows that for $r \geq 4$ we have $\gp(H_r) = r$. 
		
		We now turn to the case of larger $a$; let $3 \leq a < b$. Consider the graph $H_{b,a}$ formed by attaching $a-2$ leaves to the central vertex of the half-wheel $H_{b-a+2}$. The half-wheel is an isometric subgraph of $H_{b,a}$, so it follows from the previous discussion that if $b-a+2 \geq 4$ (i.e. if $b \geq a+2$) any largest monophonic and general position sets of $H_{b,a}$ can contain at most two or $b-a+2$ vertices of $H_{b-a+2}$ vertices respectively, so that $\mono (H_{b,a}) \leq (a-2)+2 = a$ and $\gp (H_{b,a}) \leq (b-a+2)+(a-2) = b$. If $L$ is the set of $a-2$ leaves of $H_{b,a}$, then the sets $\{ 2,4\} \cup L$ and $\{ 2,4,6,\dots, 2(b-a+2)\} \cup L$ are in monophonic and general position respectively, implying that $\mono (H_{b,a}) = a$ and $\gp (H_{b,a}) = b$. This leaves only the case $b = a+1$ to examine. It follows as above that for $b = a+1$ we have $\mono (H_{b-a+2}) = \mono (H_3) = 2$, so that $\mono (H_{a+1,a}) = a$. It is easily seen that $\gp (H_3) = 4$, but the only $\gp $-sets of $H_3$ contain the central vertex $x$. By Lemma~\ref{lemma:cut vertex} we can assume that a $\gp $-set of $H_{a+1,a}$ does not contain $x$, so that $\gp (H_{a+1,a}) \leq (a-2) + 3 = a+1$ and the set $\{ 2,4,6\} \cup L$ shows that we have equality.
	\end{proof}
	Theorem~\ref{half wheel} suggests the following problem for further research.
	\begin{problem}
		For given $2 \leq a \leq b$, for which values of $n$ does there exist a graph $G$ with order $n$, $\mono (G) = a$ and $\gp (G) = b$?
	\end{problem}

	The \emph{independent position number} of a graph $G$, which we will denote by $\igp (G)$, is the number of vertices in a largest set $S \subseteq V(G)$ that is both independent and in general position. This parameter was studied in~\cite{ThoCha}. If we define the \emph{independent monophonic position number} $\imp (G)$ to be the number of vertices in a maximum subset $S \subseteq V(G)$ such that $S$ is independent and in monophonic position, then we have both $\imp (G) \leq \mono(G) \leq \gp(G)$ and $\imp(G) \leq \igp(G) \leq \gp(G)$. This raises the question of whether there is a relationship between $\mono(G)$ and $\igp (G)$. We can easily answer this in the negative.
	
	\begin{theorem}\label{thm:igp vs. mp}
		There exists a graph $G$ with $\igp (G) = a$ and $\mono(G) = b$ if and only if $1 = a \leq b$ or $2 \leq a,b$.
	\end{theorem}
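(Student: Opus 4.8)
The plan is to treat the two directions separately. For the forward direction the key remark is that $\igp(G)=1$ if and only if $G$ is complete: any two non-adjacent vertices form an independent set which is trivially in general position, so $\igp(G)\ge 2$ unless $G$ is complete, while the only independent sets of $K_n$ are singletons. Hence, if $a=1$ then $G\cong K_n$ for some $n\ge 1$, and then $\mono(G)=n\ge 1=a$, giving $1=a\le b$; and if $a\ge 2$ then $G$ has at least two vertices, so the trivial bound $\mono(G)\ge 2$ forces $b\ge 2$. This yields exactly the two asserted possibilities.

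For the backward direction I would build the required graphs in families. When $a=1$, take $K_b$. When $a=b\ge 2$, take the star $K_{1,a}$: being a tree with $a$ leaves it satisfies $\mono(K_{1,a})=a$ by Corollary~\ref{cor:block graphs}, and its $a$ leaves form an independent set of size $\alpha(K_{1,a})=a$ in general position (all pairwise distances equal two, realised through the centre), so $\igp(K_{1,a})=a$. When $2\le a<b$, take the complete split graph $G=K_{b-1}\vee\overline{K_a}$ with clique part $C$ of order $b-1$ and independent part $I$ of order $a$; every vertex of $I$ is adjacent to all of $C$, so, in the terminology of Section~\ref{sec:split graphs}, each vertex of $I$ is divided, $\omega(G)=b$, and $C$ together with any single vertex of $I$ is a separated subgraph of order $b$, while no separated subgraph can be larger because $a<b$. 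Hence $\mono(G)=\phi(G)=b$, whereas $\alpha(G)=a$ and $I$ is itself an independent general position set of size $a$, so $\igp(G)=a$.

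The case $2\le b<a$ is the substantial one, and I would split it according to $b$. If $b=2$, use $K_2$ for $a=2$; the cycle $C_7$ for $a=3$, where $\mono(C_7)=2$ while three alternate vertices form an independent general position set and $\alpha(C_7)=3$, so $\igp(C_7)=3$; and for $a\ge 4$ the half-wheel $H_a$ of Theorem~\ref{half wheel}, whose $a$ even cycle-vertices form an independent set that is simultaneously a largest general position set, so that $\igp(H_a)=\gp(H_a)=a$ while $\mono(H_a)=2$. If $b\ge 3$ and $a\ge b+2$, take the graph obtained from the half-wheel $H_{a-b+2}$ by attaching $b-2$ pendant vertices at its hub: this is the graph used in the proof of the preceding realisation result for the pair $(\mono,\gp)$, for which $\mono=b$ and $\gp=a$, and the largest general position set exhibited there (the $a-b+2$ even cycle-vertices together with the $b-2$ pendants) is independent, so $\igp=a$ as well.

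The remaining case $a=b+1$ with $b\ge 3$ is the main obstacle, since the decorated half-wheel requires $H_r$ with $r\ge 4$ to have $\mono=2$, which forces the gap $a-b$ to be at least $2$, while the wheel-with-pendants graph used for the analogous gap-one case of $(\mono,\gp)$ has a largest general position set that is not independent. I would handle it by a disjoint union: since an isolated vertex lies on no nontrivial path, $\mono(G\cup K_1)=\mono(G)+1$ and $\igp(G\cup K_1)=\igp(G)+1$, so $C_7\cup K_1$ realises $(4,3)$, and for $b\ge 4$ the graph $C_7\cup K_{1,b-2}$ realises $(b+1,b)$, using $(\igp,\mono)=(3,2)$ for $C_7$ and $(k,k)$ for $K_{1,k}$. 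This last observation in fact reduces the whole region $2\le a,b$: padding with isolated vertices moves $(\igp,\mono)$ along a diagonal, so it suffices to realise one example with $a-b$ equal to each integer, and the constructions actually needed are only $K_2$, $C_7$, the half-wheels $H_r$, and the graphs $K_{b-1}\vee\overline{K_2}=K_{b+1}-e$. The technical core is therefore the verification of $\mono$ and $\igp$ for $C_7$ and for the (decorated) half-wheels, but these computations parallel the case analyses already carried out for $\gp$ in and around Theorem~\ref{half wheel}.
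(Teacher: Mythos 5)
Your proof is correct in substance, but it follows a genuinely different and considerably longer route than the paper. The paper's proof is very short: after the same necessity observation (phrased via $\mono(G)=1\Leftrightarrow G\cong K_1$ rather than via $\igp$), it realises every pair with just two connected families — for $a\le b$ the graph $R(a-1,b-a+1)$ obtained by attaching $a-1$ leaves to one vertex of $K_{b-a+2}$ (which has $\igp=a$, $\mono=b$), and for $a>b$ the graph obtained from $K_{s,s}$ minus a perfect matching by adding an apex over one partite set together with $b-2$ pendant leaves at the apex. Your patchwork (stars, complete split graphs, $C_7$, half-wheels, decorated half-wheels, and disjoint unions) covers all cases and each verification you sketch is sound — in particular the key observations that the maximum gp-sets of $H_r$ and of the decorated half-wheels are independent, and that both $\igp$ and $\mono$ are additive over disjoint unions, are correct and legitimately recycle the earlier realisation machinery. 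What the paper's approach buys is brevity and the slightly stronger conclusion that every admissible pair is realised by a \emph{connected} graph; your graphs for the case $a=b+1$, $b\ge 3$ are disconnected (the theorem as stated does not require connectivity, but it is worth being aware of the difference). Two small slips: $\igp(K_2)=1$, not $2$, so $K_2$ does not realise $(2,2)$ — harmless, since that pair is already handled by $K_{1,2}$ in your $a=b$ case; and your closing ``diagonal padding'' remark, while a nice structural observation, should not be presented as the actual proof without checking that a base graph with the right difference $a-b$ and sufficiently small coordinates exists for every target pair (it does, but that needs a sentence).
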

	\begin{proof}
		The only graph with $\mono $-number one is $K_1$, which implies the necessity of the conditions. For $n \geq 1$ we have $\igp (K_n) = 1$ and $\mono (K_n) = n$, so we can assume that $2 \leq a,b$. For the case $a \leq b$, for $r \geq 0$ and $s \geq 1$, define $R(r,s)$ to be the graph formed by attaching $r$ leaves to one vertex of a copy of $K_{s+1}$; then we have $\mono (R(r,s)) = r+s$ and $\igp (R(r,s)) = r+1$, so that the graph $R(a-1,b-a+1)$ has the required properties.
		
		Consider now the case $a > b$. Let the graph $K_{s,s}^-$ be the complete bipartite graph $K_{s,s}$ with a perfect matching deleted. Form the graph $P(r,s)$ by adding an extra vertex $x$ to $K_{s,s}^-$ and joining $x$ by an edge to each vertex of one of the partite sets of $K_{s,s}^-$, then adding $r$ leaves to $x$. This graph satisfies $\mono (P(r,s)) = r+2$ and $\igp (P(r,s)) = r+s$. Hence the graph $P(b-2,a-b+2)$ has the necessary parameters.
	\end{proof}
	The graph $R(r,s)$ from the proof of Theorem~\ref{thm:igp vs. mp} can also be used to answer Question 2 from~\cite{KlaRalYer}. The \emph{dissociation number} $\diss(G)$ of a graph $G$ is the number of vertices in a largest subset $S \subseteq V(G)$ such that $G[S]$ has maximum degree at most one. Recall also that the \emph{$2$-position-number} $\gptwo (G)$ is the number of vertices in a largest set $K \subset V(G)$ such that no shortest path in $G$ of length two contains three vertices of $K$~\cite{KlaRalYer}. It is easily seen that any dissociation set is in $2$-general position, so that $\diss (G) \leq \gptwo (G)$. Furthermore for $2 \leq a \leq b$ we have $\diss (R(a-2,b-a+2)) = a$ and $\gptwo (R(a-2,b-a+2)) = b$ if $a \geq 3$, whilst if $a = 2$ the clique $K_b$ has the required parameters.

	For two vertices $u,v$ of a graph $G$ the set $I[u,v]$ consists of all vertices that lie on a $u,v$-geodesic; for a subgraph $S \subseteq V(G)$ we have $I[S] = \bigcup_{u,v \in S}I[u,v]$. If $I[S] = S$, then $S$ is \emph{convex}. A smallest convex set containing a given subset $S$ is the \emph{convex hull} $I_h[S]$ of $S$; if $I_h[S] = V(G)$, then $S$ is a \emph{hull-set} and the number of vertices in a smallest hull set is the \emph{hull number} $h(G)$ of $G$. The hull number has been widely studied, for example in~\cite{Araujo-2013,Cagaanan-2004,Dourado-2009}. In~\cite{ullas-2016} it was shown that for any $a,b \in \mathbb{N}$ such that $2 \leq a \leq b$ there exists a graph $G$ with $h(G) = a$ and $\gp(G) = b$. The corresponding parameter for monophonic paths, the \emph{monophonic hull number} $h_m(G)$ (discussed in Section~\ref{introduction}), has been studied in~\cite{Dourado-2008,Dourado-2010,Ignacio-2003,pelayo-2013}. We now prove a realisation theorem for the monophonic hull number and the monophonic position number. We will use the following two results in our analysis.
	
	\begin{lemma}[\cite{pelayo-2013}] 
		\label{theorem:simplicial vertex} 
		Every monophonic hull set of a graph $G$ contains all of its simplicial vertices. 
	\end{lemma}
	
	\begin{observation}\label{observation:hull}
		Let $M$ be a minimum monophonic hull set of a connected graph $G$ and let $a,b \in M$. If a vertex $c \not = a,b$ lies on an $a,b$-monophonic path, then $c \not \in M$. 
	\end{observation}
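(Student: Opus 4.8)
The plan is to argue by contradiction, exploiting the minimality of $M$ together with the monotonicity of the monophonic closure operator $K[\cdot]$. Suppose, contrary to the claim, that $c \neq a,b$ lies on some $a,b$-monophonic path and yet $c \in M$. Then by definition $c \in K[a,b]$. Since $a,b \in M \setminus \{c\}$, we have $K[a,b] \subseteq K[M \setminus \{c\}]$, and hence $c \in K[M \setminus \{c\}] = K_1[M \setminus \{c\}]$. Combining this with the trivial containment $M \setminus \{c\} \subseteq K_1[M \setminus \{c\}]$ gives $M \subseteq K_1[M \setminus \{c\}]$.

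Next I would propagate this inclusion up the closure sequence. The operator $K[\cdot]$ is monotone: since $K[A] = \bigcup_{u,v \in A} K[u,v]$, whenever $A \subseteq B$ we have $K[A] \subseteq K[B]$. A straightforward induction on $k$ then yields $K_k[M] \subseteq K_{k+1}[M \setminus \{c\}]$ for every $k \geq 0$. Taking $k = r$, where $r$ is the least index with $K_r[M] = [M]_m = V(G)$ (such an $r$ exists, as recalled in the introduction), we obtain $V(G) = K_r[M] \subseteq K_{r+1}[M \setminus \{c\}] \subseteq V(G)$, so that $[M \setminus \{c\}]_m = V(G)$. Thus $M \setminus \{c\}$ is a monophonic hull set of $G$ with $|M \setminus \{c\}| < |M|$, contradicting the minimality of $M$. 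Hence $c \notin M$.

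There is no genuine obstacle in this argument; the only points needing explicit mention are that the monophonic closure operator is monotone and that the monophonic hull is obtained after finitely many iterations of it, both of which are set up in the introduction. This is simply the monophonic analogue of the familiar fact that no vertex of a minimum geodetic hull set can lie in the geodetic interval of two other members of the set.
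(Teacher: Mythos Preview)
Your argument is correct. The paper itself offers no proof of this observation; it is simply stated and then used. Your write-up supplies the standard justification: if $c$ were in $M$ then $c \in K[M\setminus\{c\}]$, whence $M \subseteq K_1[M\setminus\{c\}]$, and monotonicity of the closure operator propagates this to $[M]_m \subseteq [M\setminus\{c\}]_m = V(G)$, contradicting minimality. This is exactly the intended reasoning behind the observation, so there is nothing to compare.
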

	It follows from Observation~\ref{observation:hull} that every minimum monophonic hull set of a connected graph $G$ is an mp-set in $G$. Consequently, for any graph with order $n \geq 2$ we have $2 \leq h_m(G) \leq \mono(G) \leq n$.  In view of this inequality, we have the following realisation result.
	
	\begin{theorem}
		For all $n,a,b \geq 1$ there exists a connected graph $G$ with $h_{m}(G) = a$, $\mono (G) = b$ and order $n$ if and only if $a = b = n$ or $2 \leq a \leq b \leq n-1$.   
	\end{theorem}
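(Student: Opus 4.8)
The plan is to prove the equivalence in both directions.

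\emph{Necessity.} Suppose $G$ is connected of order $n$ with $h_m(G)=a$ and $\mono(G)=b$. If $n=1$ then $G=K_1$ and $a=b=n=1$. If $n\ge 2$, the inequality $2\le h_m(G)\le \mono(G)\le n$ recorded just before the theorem gives $2\le a\le b\le n$; moreover, if $b=n$ then $\mono(G)=n$ forces $G\cong K_n$, so that $a=h_m(K_n)=n=b$. Hence either $a=b=n$ or $2\le a\le b\le n-1$, which is the stated condition.

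\emph{Sufficiency.} The case $a=b=n$ is realised by $K_n$: all its vertices are simplicial, so $h_m(K_n)=n$ by Theorem~\ref{theorem:simplicial vertex}, and $\mono(K_n)=n$. Now assume $2\le a\le b\le n-1$. Start from the complete graph on the $b+1$ vertices $u,v_1,\dots,v_{a-1},w_1,\dots,w_{b-a+1}$, delete the $a-1$ edges $uv_1,\dots,uv_{a-1}$, and call the result $G_0$; then let $G_{n,a,b}$ be obtained from $G_0$ by attaching a pendant path whose vertices, in order, are $v_1,z_1,z_2,\dots,z_{n-b-1}$ (so $G_{b+1,a,b}=G_0$). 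Clearly $|V(G_{n,a,b})|=n$, and the claim is that $h_m(G_{n,a,b})=a$ and $\mono(G_{n,a,b})=b$.

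First analyse $G_0$. Its simplicial vertices are exactly $u,v_1,\dots,v_{a-1}$, since $N(u)=\{w_1,\dots,w_{b-a+1}\}$ and $N(v_i)=\{v_j:j\ne i\}\cup\{w_1,\dots,w_{b-a+1}\}$ are cliques while each $w_j$ has the non-adjacent pair $u,v_1$ in its neighbourhood; so $h_m(G_0)\ge a$ by Theorem~\ref{theorem:simplicial vertex}, and conversely $\{u,v_1,\dots,v_{a-1}\}$ is a hull set because each $w_j$ lies on the induced path $u,w_j,v_1$, whence already $K[\{u,v_1,\dots,v_{a-1}\}]=V(G_0)$; thus $h_m(G_0)=a$. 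Next, $G_0$ has no induced $P_4$ --- all its non-adjacencies contain $u$, and no three such pairs can be the three non-edges of an induced $P_4$ --- so $G_0$ is a cograph, hence distance-hereditary, and Observation~\ref{lem:dh} gives $\mono(G_0)=\gp(G_0)$. Since $\diam(G_0)=2$, Theorem~\ref{thm:diameter2} gives $\gp(G_0)=\max\{\omega(G_0),\eta(G_0)\}$; here $\omega(G_0)=b$ (realised by the clique $V(G_0)\setminus\{u\}$) and $\eta(G_0)=b$, because $\overline{G_0}$ is the star $K_{1,a-1}$ together with $b+1-a\ge 1$ isolated vertices, whose largest induced complete multipartite subgraph is the edgeless graph on the $b$ vertices distinct from the star's centre. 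Hence $\mono(G_0)=b$. Finally, if $n>b+1$ one passes from $G_0$ to $G_{n,a,b}$ by adjoining $z_1,\dots,z_{n-b-1}$ successively, each at the previous vertex of the path (first $z_1$ at $v_1$, then $z_i$ at $z_{i-1}$): each step attaches a pendant at a simplicial vertex, so by Proposition~\ref{proposition:Trees} the monophonic position number stays equal to $b$. The graph $G_{n,a,b}$ again has exactly $a$ simplicial vertices, namely $u$, $v_2,\dots,v_{a-1}$ and the path-endpoint $z_{n-b-1}$, and these form a monophonic hull set, since the path $z_{n-b-1},z_{n-b-2},\dots,z_1,v_1,w_j,u$ is induced, so $K[\{u,z_{n-b-1}\}]$ already contains every $z_i$, the vertex $v_1$ and every $w_j$, after which $v_2,\dots,v_{a-1}$ lie in the set by assumption; therefore $h_m(G_{n,a,b})=a$.

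\emph{Expected main difficulty.} The delicate part is not finding $G_0$ but controlling both invariants along the pendant-path padding: for $\mono$ this is painless via Proposition~\ref{proposition:Trees}, but for $h_m$ one needs both inclusions --- that padding only transfers simpliciality along the path (so the number of forced vertices stays $a$) and that no strictly smaller hull set appears, i.e.\ that the $a$ simplicial vertices of $G_{n,a,b}$ still span the whole graph, which is the role of the explicit induced path through $v_1$ and some $w_j$. A secondary point is the computation $\eta(G_0)=b$: one must confirm that the isolated vertices of $\overline{G_0}$ cannot be combined with the star into a larger induced complete multipartite subgraph, and this is where the hypothesis $a\le b$ (guaranteeing at least one such isolated vertex) is used.
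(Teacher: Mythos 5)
Your proposal is correct, and its skeleton matches the paper's: the necessity direction is the same chain $2\le h_m(G)\le \mono(G)\le n$ plus the observation that $\mono(G)=n$ forces $G\cong K_n$, and the sufficiency gadget is essentially the paper's graph $G(a,b,\ell)$ (a $b$-clique, an extra vertex adjacent to exactly $b-a+1$ clique vertices, and a pendant path to pad the order to $n$). Two points of genuine divergence are worth recording. First, you attach the padding path at a simplicial clique vertex $v_1$ rather than at the split vertex $u$ (the paper's $x_0$); this lets you keep $\mono$ fixed along the padding by iterating Proposition~\ref{proposition:Trees}, whereas the paper re-analyses mp-sets of the padded graph directly (at most two mp-vertices on the path, and the $\max\{a,b-a+2\}\le b$ case split). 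Second, you compute $\mono(G_0)=b$ by observing $G_0$ is a cograph and invoking Observation~\ref{lem:dh} together with Theorem~\ref{thm:diameter2}, where the paper just exhibits the $b$-clique and bounds mp-sets by hand; your route is slicker but requires the (correct, and correctly justified) computation $\eta(G_0)=b$, which silently uses $a\le b$. Both approaches buy the same theorem at comparable cost. One caveat you should make explicit: what you prove (and what the paper's own proof establishes) is the condition ``$a=b=n$ or $2\le a\le b\le n-1$,'' which is \emph{not} literally the condition displayed in the theorem statement (``$a=b=n=1$ or $2\le a=b\le n-1$''); the displayed statement appears to contain typographical errors, since $K_n$ realises $a=b=n$ for every $n$ and your $G_{n,a,b}$ realises pairs with $a<b$. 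So your closing remark ``which is the stated condition'' is inaccurate as written, but the discrepancy lies in the statement, not in your argument.
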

	\begin{proof}
		The only graph with $h_m(G) = 1$ or $\mono (G) = 1$ is $K_1$, so assume that $a \geq 2$. If $b = n$, then $G$ must be a clique, so that $a = b = n$. It remains only to prove the existence of a graph $G$ with order $n$, $h_m(G) = a$ and $\mono(G) = b$ for all $2 \leq a \leq b \leq n-1$. 
		
		Let $W$ be a clique of order $b$ and $P_{\ell }$ be a path of order $\ell \geq 1$ with vertices $x_1,\dots ,x_{\ell }$ such that $x_i \sim x_{i+1}$ for $1 \leq i \leq \ell-1$. For $2 \leq a \leq b$ let $G(a,b,\ell)$ be the graph formed from $W$ and $P_{\ell}$ by joining $x_1$ to $b-a+1$ vertices of $W$ by edges. Write $X = V(W) \setminus N(x_0)$ and $Y = V(W) \cap N(x_0)$, so that $|X| = a-1$ and $|Y| = b-a+1$. The order of $G(a,b,\ell)$ is $b+\ell $. By Lemma~\ref{theorem:simplicial vertex}, any monophonic hull set of $G(a,b,n)$ must contain $X \cup \{ x_{\ell }\}$; conversely, this subset is a monophonic hull set, so $h_m(G(a,b,\ell )) = a$. As the clique number of $G(a,b,\ell )$ is $b$, we have $\mono (G(a,b,\ell )) \geq b$. For the converse, let $M$ be a maximum mp-set of $G(a,b,\ell)$. $M$ contains at most two vertices of $P_{\ell }$ and if $|M \cap V(P_{\ell })| = 2$, then $M \cap W = \emptyset $ and $|M| \leq b$. If $|M \cap V(P_{\ell })| = 1$, then $M$ cannot contain vertices of both $X$ and $Y$, so that $|M| \leq \max \{ a,b-a+2\} \leq b$. Thus $|M| = b$ and $\mono (G(a,b,\ell)) = b$. Therefore the graph $G(a,b,n-b)$ has the required properties.  
	\end{proof}
	
	\section{Computational complexity}\label{sec:comp}
	
	In this section, we study the computational complexity of the problem of finding the monophonic position number of a general graph. To this end, we formally define the decision version of the problem:
	\begin{definition}\label{def:prob}
		{\sc Monophonic position set} \\
		{\sc Instance}: A graph $G$, a positive integer $k\leq |V(G)|$. \\
		{\sc Question}: Is there a monophonic position set $S$ for $G$ such that $|S|\geq k$?
	\end{definition}
	
	The problem is hard to solve as shown by the next theorem.
	
	\begin{theorem}\label{thm:NP}
		The {\sc Monophonic position set} problem is NP-hard.   
	\end{theorem}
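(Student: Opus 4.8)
The plan is to give a short many-one reduction from the classical NP-complete problem {\sc Clique} (given a graph $G$ and an integer $k$, decide whether $\omega(G)\geq k$), exploiting the exact formula for the monophonic position number of a join proved in Proposition~\ref{lemma:join}. The intuition is that taking the join of an arbitrary graph $G$ with a sufficiently large clique ``drowns out'' every maximum monophonic position set except those consisting of a maximum clique of $G$ together with the added clique: the added clique contributes a clique of size $n$ to every such mp-set, any mp-set meeting both sides of the join must itself be a clique (this is exactly the content of the upper-bound argument in the proof of Proposition~\ref{lemma:join}), and no mp-set living inside $G$ can compete because it has at most $n$ vertices. Thus $\mono(G\vee K_n)$ will encode $\omega(G)$ up to an additive constant.

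Concretely, given an instance $(G,k)$ of {\sc Clique} with $n=|V(G)|$, I would output the graph $G'=G\vee K_n$ together with the integer $k'=n+k$; this transformation is clearly polynomial and $G'$ is connected. By Proposition~\ref{lemma:join}, together with $\omega(K_n)=\mono(K_n)=n$,
\[ \mono(G') = \mono(G\vee K_n) = \max\{\,\omega(G)+n,\ \mono(G),\ n\,\}. \]
Since $G$ has at least one vertex, $\omega(G)\geq 1$, so $\omega(G)+n>n$; and since every monophonic position set of $G$ has at most $n$ vertices while $\omega(G)\geq 1$, we get $\mono(G)\leq n\leq \omega(G)+n-1<\omega(G)+n$. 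Hence $\mono(G')=\omega(G)+n$, and therefore $\mono(G')\geq k'=n+k$ if and only if $\omega(G)\geq k$. This shows {\sc Clique} $\leq_p$ {\sc Monophonic position set}, and the NP-hardness follows.

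The reduction itself is essentially a corollary of Proposition~\ref{lemma:join}; the only points needing care are the two trivial observations used to collapse the maximum in the displayed identity (namely $\omega(G)\geq 1$ and $\mono(G)\leq n$) and the routine check that everything is computable in polynomial time, so there is no real obstacle here. It is worth remarking that the theorem asserts only NP-hardness and not NP-completeness: it is not clear that {\sc Monophonic position set} lies in NP, since certifying in polynomial time that three prescribed vertices do \emph{not} all lie on some common induced path is not obviously possible. If one preferred a self-contained argument not relying on the join formula, an alternative would be a direct gadget reduction from {\sc 3-sat} with variable-selection and clause gadgets; the hard part of that route would be showing that no ``cheating'' monophonic position set can thread its way through the gadgets along a long induced path, which is considerably more delicate than the join-based argument above.
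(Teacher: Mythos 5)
Your reduction is correct and is essentially identical to the paper's proof: both reduce from {\sc Clique} by forming the join of $G$ with a clique of order $n=|V(G)|$, setting $k'=n+k$, and invoking Proposition~\ref{lemma:join} together with $\mono(G)\leq n$ to conclude $\mono(G\vee K_n)=\omega(G)+n$. Your closing remark about membership in NP also matches the discussion the paper gives immediately after the theorem.
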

	
	\begin{proof}
		
		We prove that the {\sc Clique} problem polynomially reduces to {\sc Monophonic position set}. 
		An instance of {\sc Clique} is given by  a graph $G$ and a positive integer $k\leq |V|$.
		The  {\sc Clique} problem asks whether $G$ contains a clique $K\subseteq V(G)$ of order $k$ or more.

		The NP-completeness of {\sc Clique} is well known, as it is one of the original list of 21 NP-complete problems presented in~\cite{Karp72}. We polynomially transform an instance $(G,k)$ of {\sc Clique} to an instance $(G',k')$ of {\sc Monophonic position set} so that $G$ has a clique of order $k$ or more if and only if $G'$ has a monophonic position set of order $k'$ or more. 
		
		Given an instance $(G,k)$, the graph $G'$ is built as follows:
		\begin{align*} 
			V(G')&=\{v',v''~|~v\in V(G)\}\\
			E(G')&=\{u' v'~|~uv\in E(G)\}\cup\{u'v''~|~u,v\in V(G)\}\cup\{u'' v''~|~u,v\in V(G)\}
		\end{align*}
		
		In words, $G'$ contains a subgraph $H$ isomorphic to $G$ and a clique graph $H'$ of order $n=|V(G)|$ such that $G'=H \vee H'$. As for $k'$, we set $k'=n+k$. 
		
		As a preliminary results, note that $\omega(G')=\omega(G)+n$. Indeed, if $S$ is any clique in $G$, then $S'=\{s'~|~s\in S\}$ is a clique in $H$, and  $S'\cup V(H')$ is a clique in $G'$, since any vertex in $H'$ is a universal vertex of $G'$. Then $\omega(G')\geq\omega(G)+n$. 
		On the contrary, if $S$ is any clique of $G'$, then $|S\cap V(H)|\leq \omega(G)$ and $|S\cap V(H')|\leq n$. Hence $\omega(G')\leq\omega(G)+n$.
		
		By Proposition~\ref{lemma:join},
		$\mono(G')=\mono(H \vee H') = \max\{\omega (H) + \omega (H'), \mono(H), \mono(H') \}$. We have that  $\omega (H')=\mono(H')=n$ for $H'$ is a clique; $\mono(H) = \mono(G)$ and $\omega (H)=\omega(G)\leq n$ for $H$ is isomorphic to $G$. Then $\mono(G')= \max\{\omega (H) + n, \mono(H), n \}= \omega (H) + n$, being $\mono(H)\leq n$.
		
		Now assume that an instance $(G,k)$ of {\sc Clique} has a positive answer, then $\omega(G)=\omega(H)\geq k$. Then $\mono(G')= \omega (H) + n\geq k + n = k'$, and hence {\sc Monophonic position set} has a positive answer. On the other hand, if the instance $(G,k)$ of {\sc Clique} has a negative answer then $\omega(G) =\omega(H) < k$, which in turn implies that {\sc Monophonic position set} has a negative answer, since $\mono(G')= \omega (H) + n < k +n$.
	\end{proof}
	
	By the proof of Theorem~\ref{thm:NP}, it is not clear if {\sc Monophonic position set} is NP-complete. However if we restrict the problem to instances $(G,k)$ with $k>|V(G)|/2$ and $G=H \vee K$, where $H$ is a generic graph and $K$ is a clique graph having the same order of $H$, the problem is NP-complete. Indeed, given a solution, it can be tested in polynomial time if it is a clique and if its order is larger than $k$.

	\section*{Acknowledgements}
	The first author thanks the University of Kerala for providing JRF. The third author gratefully acknowledges funding support from EPSRC grant EP/W522338/1 and London Mathematical Society grant ECF-2021-27 and thanks the Open University for an extension of funding during lockdown. The authors thank Dr. Erskine for help with computation of the mp-numbers of cubic cages and the anonymous reviewers for their useful feedback.


\end{document}